\newtheorem{theorem}{Theorem}
\newtheorem{lemma}[theorem]{Lemma}
\newtheorem{defn}[theorem]{Definition}
\newtheorem{remark}[theorem]{Remark}
\newcommand\mynewcommand[1]{\let#1\relax\newcommand#1}
\mynewcommand{\f}[1]{\mathfrak{#1}}
\mynewcommand{\c}[1]{\mathcal{#1}}
\mynewcommand{\b}[1]{\mathbb{#1}}
\mynewcommand{\bf}[1]{\bm{#1}}
\mynewcommand{\bs}[1]{\boldsymbol{#1}}
\mynewcommand{\leq}{\leqslant}
\mynewcommand{\geq}{\geqslant}
\mynewcommand{\Re}{\operatorname{Re}}
\mynewcommand{\Im}{\operatorname{Im}}
\DeclarePairedDelimiter{\abs}{\lvert}{\rvert}
\DeclarePairedDelimiter{\norm}{\lVert}{\rVert}
\mynewcommand{\intg}[3]{\int_{#1} #2 \,\text{d}#3}
\mynewcommand{\eps}{\varepsilon}
\mynewcommand{\phi}{\varphi}
\DeclareMathOperator{\tr}{tr}
\renewcommand{\mod}{\operatorname{mod}}
\newcommand{\mat}[1]{\left(\begin{matrix}#1\end{matrix}\right)}
\renewcommand{\mod}{\operatorname{mod}}
\tikzset{
	block/.style = {draw, fill=white, rectangle, minimum height=3em, minimum width=3em},
	tmp/.style  = {coordinate},
	sum/.style= {draw, fill=white, circle, node distance=1cm, inner sep=0pt},
	input/.style = {coordinate},
	output/.style= {coordinate},
	pinstyle/.style = {pin edge={to-,thin,black}
	}
}
\begin{document}
	\title{On the exactness of a stability test for Lur'e systems with slope-restricted nonlinearities}
	\author{Andrey Kharitenko, Carsten W. Scherer}
	
	\maketitle
	
	\begin{abstract}
		In this note it is shown that the famous multiplier absolute stability test of R. O'Shea, G. Zames and P. Falb is necessary and sufficient if the set of Lur'e interconnections is lifted to a Kronecker structure and an explicit method to construct the destabilizing static nonlinearity is presented.
	\end{abstract}
	\renewcommand{\thesection}{}
	
	\renewcommand{\thesection}{\arabic{section}}
	\setcounter{section}{0}
	
	\section{Introduction}
	\label{sec:introduction}
	A classical problem in control theory is the stability analysis of the so-called Lur'e systems.
	Studied by A. Lur'e and V. Postnikov in 1944 \cite{lure1944on} explicitly for the first time, they consist of a feedback interconnection between a linear time-invariant system $G$ and a nonlinear static operator~$\Delta$.
	
	The method of stability multipliers \cite{desoer2009feedback, willems1971analysis} is used to establish stability of such systems by searching for an artificial system $M \in \c{M}_{\Delta}$ (called multiplier) such that $-MG$ is strictly passive. Here
	$\c{M}_{\Delta}$ is a suitable set of multipliers such that $\Delta M^{-1}$ is a positive operator for any $M \in \c{M}_{\Delta}$.
	Under suitable assumptions, if such a multiplier is found, stability of the interconnection can be deduced from the passivity- or IQC-theorem
	\cite{desoer2009feedback}, \cite{megretski1997system}.
	Different classes of nonlinearities $\bs{\Delta}$ allow for different multiplier classes $ \mathcal{M}_{\bs{\Delta}}$, and a larger multiplier class implies a less conservative stability test.
	
	Yet the question of whether a stability criterion created in this way is necessary is dependent on $\mathcal{M}_{\bs{\Delta}}$ and remains open in general.
	Many well-known multiplier stability criteria, such as the Popov or circle criterion, were shown already early on to be only sufficient \cite{pyatnitskii1973existence, brockett1965optimization, brockett1966status}.
	For the class of monotone or slope-restricted nonlinearities, a particularly rich class of multipliers is given by the so-called O'Shea-Zames-Falb (OZF) multipliers \cite{zames1968stability,willems1968some,carrasco2016zames}, which were introduced in \cite{zames1968stability} and \cite{willems1968some} for continuous- and discrete-time SISO interconnections and extended in \cite{safonov2000zames} to the MIMO case for monotone nonlinearities.
	In fact, for this class of nonlinearities in discrete-time, OZF-multipliers form the largest possible passivity multiplier set \cite{mancera2005all} and give the least conservative results.
	Along with the discovery of suitable search methods \cite{chen1995robustness,chang2011computation,carrasco2016zames,carrasco2019convex,fetzer2017absolute}, this has motivated its extensive use in recent times, e.g. \cite{freeman2018noncausal,michalowsky2021robust,fetzer2017zames}.
	
	The question about the conservatism of this multiplier set is, therefore, of great interest \cite{zhang2021duality, khong2021necessity, su2021necessity} and its exactness was conjectured in \cite{carrasco2016zames}.
	Moreover, this question is closely related to the Kalman problem from the 1950s, which asks for necessary and sufficient conditions for the stability of Lur'e interconnections with slope-restricted nonlinearities.
	
	In this note we show that the stability test generated by OZF-multipliers cannot differentiate between Lur'e interconnections in different dimensions, if the linear part is lifted from $G$ to the Kronecker form $G \otimes I_d$, and could, hence, be potentially conservative.
	As our main result, we prove that, up to this lifting, the test is indeed necessary and sufficient and provide an explicit construction of the destabilizing nonlinearity.
	We also point out connections to the duality bounds obtained in \cite{zhang2021duality} and to criteria for the absence of periodic oscillations in nonlinear filters \cite{claasen1975frequency}.
	
	The paper is structured as follows.
	After introducing the necessary concepts of absolute stability and integral quadratic constraints as well as stating the main stability test in Section \ref{sec:preliminary}, we proceed in Section \ref{sec:observation} to show that the test cannot differentiate between interconnections up to a lifting.
	In Section \ref{sec:main} we then state our main exactness result.
	To prove it, we reformulate the infeasibility of the main stability test in Section \ref{sec:linearProgram} as a condition on a linear program and explicitly construct a nonlinearity using duality in Section \ref{sec:constructionUncertainty}.
	Finally, in Section \ref{sec:proofMain} we prove our main result and discuss its connections to other results in the literature in Section \ref{sec:discussion}.
	All technical proofs, except the one of the main exactness result, can be found in Appendix~\ref{sec:proofs}, while Appendix \ref{sec:auxiliary} contains two auxiliary facts.
	
	\section{Notation and Preliminary Results}
	\label{sec:preliminary}
	
	\subsection{Notation}
	
	The standard inner product in $\mathbb{R}^d$ is denoted as $\<u,v\> = u^\top v$, while
	$\norm{M} = \bar{\sigma}(M)$ is the spectral norm of a real matrix.
	The identity matrix in $\b{R}^{d \times d}$ is denoted by $I_d$.
	Moreover,
	$\ell_d^{2e}$ denotes the linear space of all sequences $x:\mathbb{N}_0\to\mathbb{R}^d$, while
	$\ell_d^2$ is the subspace of square summable sequences equipped with the inner product
	$\<x,y\> = \sum_{k=0}^\infty \<x_k,y_k\>$.
	The power of a signal $x \in \ell_d^{2e}$ is defined by $\operatorname{pow}(x)^2 = \limsup_{N \to \infty} \frac{1}{N}\sum_{k=0}^N \norm{x_k}^2\in[0,\infty]$.
	The space of linear bounded operators on $\ell_d^2$
	and the (induced) norm thereon are denoted by $\mathcal{L}(\ell_d^2)$ and $\|.\|$
	, respectively.
	The Kronecker product of two matrices $A$ and $B$ is denoted by $A \otimes B$ and the direct sum of two vector spaces $X$ and $Y$ by $X \oplus Y$.
	If $N \in \b{N}$ and $x \in \ell_d^{2e}$, then $P_Nx$ denotes the cutoff projection
	defined as $(P_N x)_k = x_k$ if $k\leq N$ and $(P_Nx)_k = 0$ for $k > N$.
	A relation $R \subseteq \ell_m^{2e} \times \ell_l^{2e}$ is said to be bounded if there exist $\gamma$ and $\beta$ such that for all $(x,y) \in R$ and $N \in \b{N}_0$ we have $\norm{P_N y} \leq \gamma \norm{P_N x} + \beta$.
	The infimum of all such values $\gamma$, called the gain of $R$, is denoted by $\norm{R}$.
	The relation $R$ is said to be total if for each $x \in \ell_m^{2e}$ there exists some $y \in \ell_l^{2e}$ with $(x,y) \in R$ and causal if for all $N \in \b{N}_0$ and all $(x,y), (u,v) \in R$ such that $P_N x = P_N u$ it follows that there exists some $w \in \ell_{l}^{2e}$ with $(u,w) \in R$ and $P_N y = P_N w$.
	Operators $G:\ell_m^{2e} \to \ell_l^{2e}$ are identified with their graph relation $R = \{(x,G(x)) \mid x \in \ell_m^{2e}\}$.
	The transfer function of a stable finite-dimensional linear time-invariant (LTI) system $G$ is denoted by $G(z)$.
	Finally we define the unit circle by $\b{T} = \{z \in \b{C} \mid \abs{z} = 1\}$.

	\subsection{Well-posedness and stability}
	%
	%
	
	Let $G:\ell_{m}^{2e} \to \ell_{l}^{2e}$ be linear and bounded and $\Delta \subseteq \ell_{l}^{2e} \times \ell_{m}^{2e}$ a relation.
	Consider the standard feedback interconnection (Fig. \ref{fig:feedback}) between $G$ and $\Delta$ defined by
	\begin{align}
		\label{eq:iqcRelationInterconnection}
		e_2 = Ge_1 + u_2 \text{\ and\ } (e_2, e_1-u_1) \in \Delta,
	\end{align}
	where the input $u = (u_1,u_2)$ belongs to $\ell_{m}^{2e} \times \ell_{l}^{2e} = \ell_{m+l}^{2e}$.
	The interconnection (\ref{eq:iqcRelationInterconnection}) is said to be well-posed if the interconnection relation
	\begin{align}
		\label{eq:interconnectionRelation}
		R(\Delta) = \{(u,e) \in \ell_{l+m}^{2e} \times \ell_{l+m}^{2e} \mid \eqref{eq:iqcRelationInterconnection} \text{\ is satisfied}\}
	\end{align}
	with $e=(e_1,e_2)$ is total and causal.
	It is stable if, additionally, $R(\Delta)$ is bounded.
	If $\bs{\Delta}$ is a set of relations, we say that (\ref{eq:iqcRelationInterconnection}) is robustly stable against $\bs{\Delta}$ if it is stable for each $\Delta \in \bs{\Delta}$ and $\sup_{\Delta \in \bs{\Delta}} \norm{R(\Delta)} < \infty$ holds.
	Note that this is a uniform notion of stability for the class $\bs{\Delta}$.
	
	\begin{figure}[h]%
		\centering
		\includegraphics[scale=1]{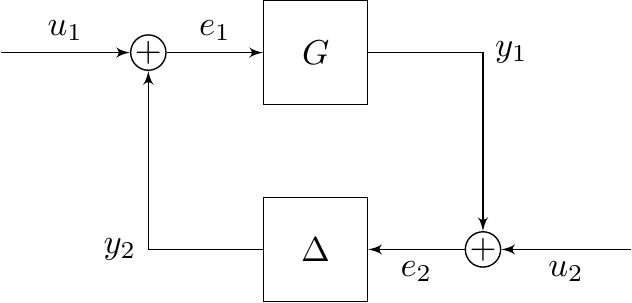}
		\caption{Standard feedback interconnection}
		\label{fig:feedback}
	\end{figure}
	
	\subsection{Slope-restricted relations}
	
	In the following we consider any relation $f \subseteq \mathbb{R}^d \times \mathbb{R}^d$ as a multifunction with domain $\operatorname{dom}(f) = \{x \in \mathbb{R}^d \mid \exists y \in \mathbb{R}^d: (x,y) \in f\}$ and the value $f(x) = \{y \in \mathbb{R}^d \mid (x,y) \in f\}$ at $x \in \b{R}^d$; we write $f:\mathbb{R}^d \rightrightarrows \mathbb{R}^d$.
	
	A multifunction $f:\mathbb{R}^d \rightrightarrows \mathbb{R}^d$ is said to be cyclically monotone if for any cyclic sequence $x_1,\ldots,x_n \in \operatorname{dom}(f)$, $x_{n+1} = x_1$, and $y_j \in f(x_j)$, $j=1,\ldots,n$ it holds that $\sum_{j=1}^n \<y_j,x_{j+1}-x_j\> \leq 0$.
	
	Following \cite{freeman2018noncausal} and with $\lambda\in(-\infty,0]$, $\kappa\in(0,\infty]$, we define for every $f:\mathbb{R}^d \rightrightarrows \mathbb{R}^d$ the multifunction $T_{\lambda,\kappa,f} = (f-\lambda I) \circ (I-\frac{1}{\kappa}f)^{-1}$ so that $y \in T_{\lambda,\kappa,f}(x)$ iff there exists some $z \in \operatorname{dom}(f)$ and $u \in f(z)$ with $x=z-\frac{1}{\kappa}u$ and $y = u-\lambda z$.

	Here, we use the convention $1/\infty=0$ and note
	that $T_{0,\infty,f} = f$ by definition.
	The multifunction $f:\b{R}^d \rightrightarrows \b{R}^d$ is said to be $[\lambda,\kappa]$-slope-restricted (written $f \in \operatorname{slope}_d[\lambda,\kappa]$) if $T_{\lambda,\kappa,f}$ is cyclically monotone.
	
	Every relation $f \subseteq \b{R}^d \times \b{R}^d$ defines a static relation $\Delta = \Delta_f \subseteq \ell_d^{2e} \times \ell_d^{2e}$ on signals by $(w,z) \in \Delta$ iff $z_k \in f(w_k)$ for all $k \in \b{N}_0$.
	In the sequel  we work with the class
	\begin{align*}
		\bs{\Delta}_d(\lambda,\kappa) := \{\Delta_f \mid f \in \operatorname{slope}_d[\lambda,\kappa] \text{\ total and\ } 0 \in f(0)\}\,.
	\end{align*}

	\subsection{Stability multipliers}
	
	Any shift-invariant operator $M \in \mathcal{L}(\ell_d^2)$ has an infinite block-Toeplitz representation $M = (M_{i-j})_{i,j=0}^\infty$ with $M_{j} \in \b{R}^{d \times d}$.
	Conversely, any block-Toeplitz matrix $(M_{i-j})_{i,j=0}^\infty$ that satisfies $\sum_{j=-\infty}^\infty \norm{M_j} < \infty$ 
	defines an operator $M \in \mathcal{L}(\ell_d^2)$ by $Mx = (\sum_{j=0}^\infty M_{ij} x_j)_{i=0}^\infty$. 
	A scalar matrix $M = (m_{i-j})_{i,j=0}^\infty \in \mathbb{R}^{\mathbb{N}_0 \times \mathbb{N}_0}$ is said to be doubly hyperdominant, if $m_{j} \leq 0$ for $j \neq 0$ and $\sum_{j=-\infty}^\infty m_{j} \geq 0$
	where the convergence of the series is assumed by definition.
	We denote the set of all doubly hyperdominant Toeplitz matrices by $\c{H}$.
	The above terminology is also used for finite matrices $M \in \b{R}^{N \times N}$ whenever $N$ is clear from the context.
	A block-Toeplitz matrix $M = (M_{i-j})_{i,j=0}^\infty$ is said to be doubly hyperdominant, if it is doubly-hyperdominant as a scalar matrix.
	
	The IQC theorem \cite{scherer2000linear, scherer2018iqc, veenman2016robust,kharitenko2022some} permits to verify the stability of the interconnection of $G$ and $\Delta_f$ as follows.
	
	\begin{theorem}
		\label{thm:stabilityIQC}
		Let $G$ be linear, bounded and causal on $\ell_d^2$. 
		Assume that the feedback interconnection of $G$ and $\Delta$ is well-posed for every $\Delta \in \bs{\Delta}_d(\lambda,\kappa)$.
		Then the interconnection is robustly stable against $\bs{\Delta}_d(\lambda,\kappa)$ if there exists some $\epsilon > 0$ and $M \in \c{H} \otimes I_d$ such that
		\begin{align}
			\<\mat{Gw \\ w},\Pi_M(\lambda,\kappa) \mat{Gw \\ w}\>\leq -\epsilon \norm{w}_{2}^2
			\text{\ for all\ } w \in \ell_d^2\,,
			\label{eq:negativeMultiplier}
		\end{align}
		where
		\begin{align*}
			\Pi_M(\lambda,\kappa) := \mat{-\lambda (M+M^*) & M^*+\frac{\lambda}{\kappa}M \\ M+\frac{\lambda}{\kappa}M^* & - \frac{1}{\kappa}(M + M^*)}.
		\end{align*}
	\end{theorem}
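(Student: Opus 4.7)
Plan. The proof is a standard application of the IQC theorem from one of the cited references such as \cite{scherer2018iqc,kharitenko2022some}. Those results ensure robust stability once three ingredients are in place: (i) the strict graph inequality on $G$, which is precisely \eqref{eq:negativeMultiplier}; (ii) the ``hard'' IQC
\[
\<(v,z),\Pi_M(\lambda,\kappa)(v,z)\>\ge 0
\quad\text{for all }(v,z)\in\Delta_f\cap(\ell_d^2\times\ell_d^2)
\]
for every $\Delta_f\in\bs{\Delta}_d(\lambda,\kappa)$; and (iii) a homotopy condition, which holds because $\lambda\le 0\le\kappa$ forces $\tau f\in\operatorname{slope}_d[\tau\lambda,\tau\kappa]\subseteq\operatorname{slope}_d[\lambda,\kappa]$ for $\tau\in[0,1]$, so the assumed well-posedness propagates along $\tau\Delta$. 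Only~(ii) requires genuine work.

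\textbf{Reduction to cyclic monotonicity.} Given $(v,z)$ with $z_k\in f(v_k)$ pointwise, I define the transformed signals $w_k:=v_k-\tfrac{1}{\kappa}z_k$ and $u_k:=z_k-\lambda v_k$; by the very definition of $T_{\lambda,\kappa,f}$, this yields $u_k\in T_{\lambda,\kappa,f}(w_k)$ for all $k$. Expanding the quadratic form and using $\<v,M^*z\>=\<z,Mv\>$ together with the analogous adjoint identities for the diagonal terms gives the algebraic identity
\[
\<(v,z),\Pi_M(\lambda,\kappa)(v,z)\>=2\<u,Mw\>_{\ell_d^2}.
\]
Hence (ii) reduces to the Zames--Falb inequality $\<u,Mw\>\ge 0$ whenever $u_k\in T(w_k)$ pointwise for a cyclically monotone $T:\b{R}^d\rightrightarrows\b{R}^d$ with $0\in T(0)$ and $M\in\c{H}\otimes I_d$.

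\textbf{Zames--Falb inequality.} I would establish this through Rockafellar's theorem: cyclic monotonicity of $T$ implies $T\subseteq\partial\phi$ for some proper convex lower semicontinuous $\phi:\b{R}^d\to\b{R}\cup\{\infty\}$, and $0\in T(0)$ permits the normalization $\phi(0)=0$. Writing $M=(m_{i-j}I_d)$ with doubly hyperdominant scalar coefficients, one expands
\[
\<u,Mw\> = \sum_{j}m_j\sum_{i}\<u_i,w_{i-j}\>
\]
and applies the subgradient inequality $\<u_i,w_{i-j}-w_i\>\le\phi(w_{i-j})-\phi(w_i)$ with non-negative weights $-m_j$ for $j\ne 0$. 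After telescoping and collecting the diagonal contribution $\bigl(\sum_j m_j\bigr)\sum_i\<u_i,w_i\>$, one arrives at a combination of $\bigl(\sum_j m_j\bigr)\phi(w_i)\ge 0$ and non-negative subgradient gaps, yielding $\<u,Mw\>\ge 0$. The tensor structure $M\in\c{H}\otimes I_d$ is precisely what allows this scalar manipulation to carry over block-by-block in the MIMO case.

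\textbf{Main obstacle.} The technically delicate part is justifying the rearrangement of the double sum under the summability $\sum_j\norm{M_j}<\infty$, handling the one-sidedness of $\ell_d^{2e}$ signals (by extending $w$, $u$ by zero to negative indices and controlling the resulting boundary gaps in the telescoping), and making sure that the convergence of the tails of $\phi(w_i)$ for $w\in\ell_d^2$ does not produce a negative contribution. Once (ii) is secured, the IQC theorem combines it with (i) and (iii) to produce the uniform gain bound $\sup_{\Delta\in\bs{\Delta}_d(\lambda,\kappa)}\norm{R(\Delta)}<\infty$, which is the claimed robust stability.
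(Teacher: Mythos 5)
The paper does not prove Theorem~\ref{thm:stabilityIQC}: it states the result as an instance of the IQC theorem and cites the literature (Scherer, Veenman, Kharitenko et al.), so there is no internal argument to compare against. Your sketch is a correct outline of how the standard proof goes. In particular, the algebraic identity $\<(v,z),\Pi_M(\lambda,\kappa)(v,z)\> = 2\<u,Mw\>$ with $w = v - \kappa^{-1}z$ and $u = z - \lambda v$ checks out on direct expansion (use $\<v,M^*z\>=\<z,Mv\>$ and collect terms), and reducing the hard IQC to the Zames--Falb inequality $\<u,Mw\>\geq 0$ for $u_k\in T_{\lambda,\kappa,f}(w_k)$ via Rockafellar's characterization of cyclic monotonicity is the right route. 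Two points you assert but leave unjustified: first, the inclusion $\operatorname{slope}_d[\tau\lambda,\tau\kappa]\subseteq\operatorname{slope}_d[\lambda,\kappa]$ used in the homotopy step is plausible but needs its own short argument that enlarging the slope interval preserves cyclic monotonicity of $T_{\lambda,\kappa,\cdot}$; second, the issues you collect under ``main obstacle'' do resolve, but one should say how: normalizing $\phi(0)=0$ with $0\in\partial\phi(0)$ gives $0\leq\phi(w_i)\leq\<u_i,w_i\>$, which simultaneously (a) makes $\sum_i\phi(w_i)$ converge absolutely for $u,w\in\ell_d^2$, (b) shows the diagonal term $(\sum_j m_j)\sum_i\<u_i,w_i\>$ is nonnegative, and (c) after zero-extension yields $\sum_i\phi(w_{i-j})\leq\sum_i\phi(w_i)$ for every shift $j$, so that the off-diagonal contribution $\sum_{j\neq 0} m_j\sum_i\bigl(\phi(w_{i-j})-\phi(w_i)\bigr)$ is nonnegative as well since $m_j\leq 0$ off the diagonal.
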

	\mbox{}\\
	This is the Zames-Falb stability test that is the subject of study in this note.
	
	
	\subsection{Positive definite functions and sequences}
	\label{sec:positiveDefiniteFunctions}
	A function $f:\b{T} \to \b{C}$ is said to be positive definite (p.d.) if $f(z^{-1}) = f(z)^*$ for all $z \in \b{T}$ and for any finite subset $\{z_1,\ldots,z_n\} \subseteq \b{T}$ the matrix $(f(z_kz_j^{-1}))_{k,j=1}^n$ is hermitian and positive semi-definite.
	Any p.d. function $f$ satisfies $f(1) \geq 0$ as well as $f(z)^* = f(z^*)$ and $\abs{f(z)} \leq f(1)$ for any $z \in \b{T}$.
	A well-known theorem from harmonic analysis reads as follows.
	\begin{theorem}[Bochner]
		\label{thm:bochner}
		For a function $f:\b{T} \to \b{C}$ the following statements are equivalent:
		\begin{enumerate}
			\item $f$ is continuous and positive definite
			
			\item there exists some finite, nonnegative measure $\mu$ on $\b{Z}$ such that $f(z) = \intg{\b{Z}}{z^{t}}{\mu(t)}$ for all $z \in \b{T}$.
		\end{enumerate}
	\end{theorem}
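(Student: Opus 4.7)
The plan is to identify the measure $\mu$ with the sequence of Fourier coefficients $c_t = \frac{1}{2\pi}\int_0^{2\pi} f(e^{i\theta}) e^{-it\theta}\,d\theta$ of $f$, thereby reducing the theorem to showing that nonnegativity of the $c_t$ together with their summability exactly characterizes continuous positive definite functions on $\b{T}$.

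The implication (2)$\Rightarrow$(1) is essentially routine. Since $\mu(\b{Z}) < \infty$ and $|z^t|=1$ on $\b{T}$, the Weierstrass $M$-test delivers uniform convergence of $\sum_{t \in \b{Z}} \mu(\{t\}) z^t$, so $f$ is continuous. Positive definiteness follows from the algebraic identity $\sum_{k,j}\xi_k\bar\xi_j f(z_kz_j^{-1}) = \sum_t \mu(\{t\})\abs{\sum_k \xi_k z_k^t}^2 \geq 0$, and the hermitian symmetry $f(z^{-1})=f(z)^*$ is immediate from $\mu$ being real-valued.

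For (1)$\Rightarrow$(2), the substantive step is to prove nonnegativity of each $c_t$. I would evaluate the positive semi-definite condition on the $N$-th roots of unity $z_k = e^{2\pi i k/N}$ for $k=0,\ldots,N-1$: the matrix $(f(z_kz_j^{-1}))_{k,j=0}^{N-1}$ is then circulant, so its eigenvalues $\lambda_m = \sum_{l=0}^{N-1} f(e^{2\pi i l/N}) e^{-2\pi i lm/N}$ are all nonnegative. By continuity of $f$, the normalized expression $\lambda_m/N$ is a Riemann sum converging to $c_m$ as $N\to\infty$ for each fixed $m$, which yields $c_m \geq 0$. Summability then comes from Fejér's theorem: the Cesàro means of the Fourier series of $f$ converge uniformly to $f$, so evaluating at $z=1$ and invoking monotone convergence on the nonnegative sequence $(c_t)$ produces $\sum_t c_t = f(1) < \infty$. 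The series $\sum_t c_t z^t$ then converges absolutely and uniformly on $\b{T}$ to a continuous function sharing all Fourier coefficients with $f$, so it equals $f$ by the uniqueness theorem for Fourier series. Setting $\mu(\{t\}) := c_t$ defines the required finite nonnegative measure on $\b{Z}$.

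The main obstacle is the nonnegativity of the $c_t$; the circulant/roots-of-unity trick, which turns the positive semi-definite hypothesis into a discrete Fourier transform statement, is the key idea. Once this is in place, the summability and recovery of $f$ from its Fourier series are standard consequences of Fejér's theorem and uniqueness of Fourier coefficients on the circle.
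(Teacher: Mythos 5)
The paper does not prove Theorem~\ref{thm:bochner}; it is cited as ``a well-known theorem from harmonic analysis'' (the Herglotz representation theorem, i.e.\ Bochner's theorem for the dual pair $\b{T}$ and $\b{Z}$), so there is no paper proof to compare against. On its own merits your argument is correct and is in fact the standard route. The easy direction (2)$\Rightarrow$(1) via the Weierstrass $M$-test and the sum-of-squares identity is fine. In (1)$\Rightarrow$(2), the circulant observation is exactly the right idea: restricting the positive semi-definiteness hypothesis to the $N$-th roots of unity produces a hermitian circulant matrix whose eigenvalues are the DFT values $\lambda_m = \sum_{l=0}^{N-1} f(e^{2\pi il/N})e^{-2\pi ilm/N}$, and $\lambda_m/N$ is a Riemann sum for the Fourier coefficient $c_m$ of the continuous function $f$, so $c_m \geq 0$. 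It is worth noting this is pleasantly parallel to how the paper actually uses positive definiteness in Section~\ref{sec:positiveDefiniteFunctions}: sampling a p.d.\ function at roots of unity yields a p.d.\ sequence, and the circulant eigenstructure is the bridge between the two. Your summability step is correct; to be fully precise, the monotone convergence argument goes through because the Fej\'er weights $a_{t,N}=(1-|t|/N)_+\,c_t$ are, for each fixed $t$, nondecreasing in $N$ and tend to $c_t\geq 0$, so $\sum_t a_{t,N}\uparrow\sum_t c_t$ while $\sigma_N(f)(1)\to f(1)$; this gives $\sum_t c_t = f(1)<\infty$. Uniqueness of Fourier coefficients then identifies $f$ with $\sum_t c_t z^t$ and $\mu(\{t\})=c_t$ is the desired measure. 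No gaps.
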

	A finite set $\{c_j\}_{j=0}^{N-1}$ of complex numbers is said to be positive definite if the circulant matrix $C = (c_{j-k \mod N})_{j,k=0}^{N-1}$ is hermitian and positive semi-definite.
	If $f:\b{T} \to \b{C}$ is p.d. and $\{z_0,\ldots,z_{N-1}\} \subseteq \b{T}$ is the set of the $N$-th roots of unity, then the sequence defined by $c_j = f(z_j)$ for $j=0,\ldots,N-1$ is p.d..
	Conversely, if we are given a p.d. sequence $\{c_j\}_{j=0}^{N-1}$, one possible p.d. interpolant is given by the next theorem, which is a discrete analogue of the one proven in \cite{belov2013positive}.
	
	\begin{theorem}
		\label{thm:belov1}
		Let $N \in \b{N}$ and let $\{z_0,\ldots,z_{N-1}\}$ be the collection of $N$-th roots of unity.
		Then the piecewise-linear function $f$ with interpolation nodes $\{z_0,\ldots,z_{N-1}\}$ satisfying $f(z_j) = c_j \in \b{C}$ for $j \in \{0,\ldots,N-1\}$ is positive definite iff $\{c_j\}_{j=0}^{N-1}$ is positive definite.
	\end{theorem}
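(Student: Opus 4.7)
\emph{Proof plan.} The implication ``$f$ p.d.\ $\Rightarrow$ $\{c_j\}$ p.d.'' will follow directly from the definition: evaluating the matrix $(f(z_j z_k^{-1}))_{j,k=0}^{N-1}$ on the $N$-th roots of unity and using $z_j z_k^{-1} = z_{(j-k)\mod N}$ produces precisely the circulant $C$, which is therefore hermitian and positive semi-definite. The non-trivial direction is the converse, for which my plan is to produce the nonnegative measure $\mu$ on $\b{Z}$ required by Bochner's theorem (Theorem~\ref{thm:bochner}) directly from the Fourier coefficients of $f$: I will set $\mu = \sum_{k\in\b{Z}} \hat{f}(k)\,\delta_k$ (with $\hat f(k)$ taken with respect to the parametrisation $z=e^{i\theta}$) and then argue $\hat{f}(k)\geq 0$ for every $k\in\b{Z}$.

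To that end I would first decompose $f = \sum_{j=0}^{N-1} c_j h_j$, where $h_j:\b{T}\to\b{R}$ is the tent function equal to $1$ at $z_j$, vanishing at the other nodes, and linear on each arc between adjacent nodes; uniqueness of the piecewise linear interpolant makes this decomposition valid. Writing $h_j(z) = h_0(z\,z_j^{-1})$, the translation rule for Fourier coefficients yields $\hat{h}_j(k) = z_j^{-k}\hat{h}_0(k)$, whence
\begin{align*}
\hat{f}(k) = \hat{h}_0(k)\sum_{j=0}^{N-1} c_j\,z_j^{-k} = \hat{h}_0(k)\,\lambda_{k\mod N},
\end{align*}
where $\lambda_m := \sum_{j=0}^{N-1} c_j\,\omega^{-jm}$, $\omega=z_1$, are exactly the $N$ eigenvalues of the circulant $C=(c_{(j-k)\mod N})_{j,k=0}^{N-1}$.

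The core of the argument will then be the classical Fejér-type identity
\begin{align*}
\hat{h}_0(k) = \tfrac{N\sin^2(k\pi/N)}{\pi^2 k^2}\text{ for }k\neq 0,\qquad \hat{h}_0(0) = \tfrac{1}{N},
\end{align*}
a routine computation whose only real content is that the tent kernel has nonnegative Fourier coefficients. Since by hypothesis the eigenvalues $\lambda_m$ are nonnegative, combining the two nonnegativities will give $\hat{f}(k)\geq 0$ for every $k\in\b{Z}$. The decay $\hat{h}_0(k)=O(1/k^2)$ makes $\{\hat f(k)\}$ summable, so $\mu$ will be a finite nonnegative measure on $\b{Z}$; continuity together with piecewise smoothness of $f$ ensures uniform convergence of its Fourier series, so that $f(z)=\int_{\b{Z}} z^t\,d\mu(t)$ pointwise on $\b{T}$, and Bochner's theorem concludes. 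The main obstacle I anticipate is thus really only the tent-kernel Fourier computation; everything else is book-keeping that aligns the DFT-positivity of $\{c_j\}$ with the Fourier-positivity of the continuous interpolant.
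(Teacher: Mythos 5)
Your proof is correct. The easy direction is handled exactly right: evaluating the p.d.\ matrix of $f$ at the $N$-th roots of unity produces precisely the circulant $C$, using $z_j z_k^{-1}=z_{(j-k)\bmod N}$. For the converse, the decomposition $f=\sum_j c_j h_j$ into translated tent functions is valid by uniqueness of the piecewise-linear interpolant; the translation identity $\hat h_j(k)=z_j^{-k}\hat h_0(k)$ and the DFT diagonalization of the circulant give $\hat f(k)=\hat h_0(k)\lambda_{k\bmod N}$; the Fej\'er-type computation $\hat h_0(k)=N\sin^2(k\pi/N)/(\pi^2k^2)\geq 0$ (and $1/N$ at $k=0$) is a standard integration-by-parts; the $\lambda_m$ are real and nonnegative because $C$ is hermitian PSD; and absolute summability ($O(1/k^2)$ decay) plus continuity of $f$ gives uniform convergence of the Fourier series to $f$, so Bochner's theorem applies with $\mu=\sum_k\hat f(k)\delta_k$. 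The paper defers its proof of this theorem to an external reference, described as a discrete analogue of Belov's argument, and that argument is precisely the one you give: the key content is that the tent kernel on $\b{T}$ has a nonnegative Fourier transform, which together with DFT-positivity of $\{c_j\}$ yields Fourier-positivity of the interpolant. So this is essentially the intended route, carried out completely.

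One small remark worth recording, though it does not affect correctness: the nonnegativity $\mu\geq 0$ implicitly uses that each $\hat f(k)$ is \emph{real} (not merely of nonnegative real part), which your chain of equalities does establish — $\hat h_0(k)$ is real because $h_0$ is real and even, and $\lambda_m$ is real because $C$ is hermitian — but it is worth stating explicitly, since $f$ itself is $\b{C}$-valued and a reader might otherwise pause at that point.
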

	
	\begin{proof}
		The proof is similar to the one of \cite[Theorem 2]{belov2013positive} and can be found in \cite{kharitenko2022some}.
	\end{proof}
	Here we call a function $f:\b{T} \to \b{C}$ piecewise-linear with interpolation nodes $\{e^{i\omega_0},\ldots,e^{i\omega_{N-1}}\} \subseteq \b{T}$ for
	$0 \leq \omega_0 < \ldots < \omega_{N-1} < 2\pi$ and $\omega_{N} = \omega_0$, if
	\begin{align*}
		f(e^{i((1-t)\omega_j + t\omega_{j+1})}) = (1-t)f(e^{i\omega_j}) + t f(e^{i\omega_{j+1}})
	\end{align*}
	holds for all $t \in [0,1]$ and $j=0,\ldots,N-1$.

	\section{Exactness results}
	\label{sec:exactness}
	Now we come to the analysis of the criterion given by Theorem \ref{thm:stabilityIQC}.
	
	\subsection{An observation about Theorem \ref{thm:stabilityIQC}}
	\label{sec:observation}
	Assume that $G:\ell^{2e} \to \ell^{2e}$ and that \eqref{eq:negativeMultiplier} is satisfied for some $\epsilon > 0$ and $M \in \c{H}$, where $d = 1$.
	If we denote
	\begin{align*}
		T = \mat{G \\ I}^* \Pi_M(\lambda,\kappa) \mat{G \\ I} \in \c{L}(\ell^2)
	\end{align*}
	and stack $d$ copies of \eqref{eq:negativeMultiplier}, this condition is equivalent to
	\begin{align}
		\label{eq:kroneckerFDI}
		\< (I_d \otimes T) w, w\> \leq -\epsilon \norm{w}^2 \text{\ for all\ } w= \mat{w^{(1)} \\ \vdots \\ w^{(d)}} \in \bigoplus_{k=1}^d \ell^2\,,
	\end{align}
	where $I_d \otimes T = \textstyle\bigoplus_{k=1}^d T \in \c{L}(\bigoplus_{k=1}^d \ell^2)$.
	By identifying $\bigoplus_{k=1}^d \ell^2$ with $\ell_d^2$ it is easy to see that \eqref{eq:kroneckerFDI} is equivalent to
	\begin{align}
		\< (T \otimes I_d) w, w\> \leq -\epsilon \norm{w}^2 \text{\ for all\ } w \in \ell_d^2\,.
	\end{align}
	Then, again by a simple computation, we note that
	\begin{align*}
		T \otimes I_d = \mat{G \otimes I_d \\ I}^* \Pi_{M \otimes I_d}(\lambda,\kappa) \mat{G \otimes I_d \\ I}\,.
	\end{align*}

	Thus \eqref{eq:negativeMultiplier} holds for the system $G \otimes I_d \in \c{L}(\ell_d^2)$, the multiplier $\bar{M} = M \otimes I_d \in \c{H} \otimes I_d$ and the same $\epsilon > 0$.
	Conversely, if \eqref{eq:negativeMultiplier} holds for $G \otimes I_d \in \c{L}(\ell_d^2)$ and with the multiplier $M \otimes I_d$, then \eqref{eq:negativeMultiplier} is also satisfied for $G$ with the multiplier $M$.
	
	This permits us to draw the following conclusion:
	\emph{The stability test of Theorem \ref{thm:stabilityIQC} cannot differentiate between the robust stability of the interconnection between $G$ and $\bs{\Delta}_1(\lambda,\kappa)$ and robust stability of the interconnection between $G \otimes I_d$ and $\bs{\Delta}_d(\lambda,\kappa)$ for any $d \in \b{N}_0$.}
	
	\subsection{Main theorem}
	\label{sec:main}
	In view of the latter observation and the fact that \eqref{eq:negativeMultiplier} is homogeneous in $\epsilon$ and $M$, we focus on multipliers that are normalized as $M \in \c{H}_1 := \{M = (m_{i-j})_{i,j=0}^{\infty}\in \c{H} \mid m_0 \leq 1\} \subseteq \c{H}$.
	For the ease of exposition, we also set $\lambda = 0$.
	The following theorem is main result of this note.
	
	\begin{theorem}
		\label{thm:mainNecessity}
		Let $G:\ell^2 \to \ell^2$ be LTI, bounded and causal with a finite-dimensional state-space representation. 
		If for $d=1$ and some $\epsilon > 0$ there exists no $M \in \c{H}_1$ such that \eqref{eq:negativeMultiplier} is satisfied, then there exists some $d \in \b{N}$ and $\Delta \in \bs{\Delta}_d(0,\kappa)$ such that the interconnection (\ref{eq:interconnectionRelation}) between $G \otimes I_d$ and $\Delta$ has a gain of at least $1/\sqrt{8\epsilon}$.
	\end{theorem}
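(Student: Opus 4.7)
The plan is to exploit linear programming duality, following the three-step strategy flagged in the introduction. Feasibility of \eqref{eq:negativeMultiplier} over $\c{H}_1$ is a linear condition in the Toeplitz coefficients $\{m_j\}$ of $M$: the hyperdominance constraints ($m_j \leq 0$ for $j \neq 0$, $\sum_j m_j \geq 0$, $m_0 \leq 1$) together with \eqref{eq:negativeMultiplier} itself (one linear inequality per test signal $w$) are all affine in $\{m_j\}$. To obtain a tractable finite-dimensional LP, I would first restrict the test signals to those that are $N$-periodic inside a long observation window and correspondingly truncate $M$ to a circulant of size $N$ whose symbol is sampled at the $N$-th roots of unity $\{z_0,\dots,z_{N-1}\}$. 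Because $G$ is finite-dimensional LTI, its action on Fourier modes is essentially diagonal, and the assumed infeasibility on $\ell^2$ passes to infeasibility of this discretized LP for all sufficiently large $N$, with a quantitative gap inherited from $\epsilon$.

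Next I would apply Farkas' lemma to the infeasible LP. The dual variables split into nonnegative weights $\{\mu_k\}_{k=0}^{N-1}$ attached to the test-signal inequalities (one per Fourier mode) and multipliers attached to the sign constraints on $m_j$. Eliminating the latter forces a sequence built from $\{\mu_k\}$ and from the matrix entries of $\Pi_M(0,\kappa)$ evaluated at $z_k$ to be doubly hyperdominant, which by the discrete Bochner-type statement of Theorem \ref{thm:belov1} is equivalent to the piecewise-linear interpolant through $\{z_k\}$ being positive definite on $\b{T}$ in the sense of Section \ref{sec:positiveDefiniteFunctions}. This is the p.d.\ certificate from which the destabilizing nonlinearity is built. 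From the same dual data I would read off a $d$-dimensional $N$-periodic signal $w^*$, with $d \leq N$, whose $k$-th Fourier coefficient is $\sqrt{\mu_k}$ placed in the $k$-th coordinate direction; this Kronecker structure is exactly what is needed to carry independent Fourier content across modes, and is the reason the lifting to $G \otimes I_d$ appears in the conclusion.

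Setting $z^* = (G \otimes I_d) w^*$ yields a finite sample graph $\{(z^*_k, w^*_k)\}_{k=0}^{N-1} \subseteq \b{R}^d \times \b{R}^d$. The last step is to verify that, after the slope-restriction transform $T_{0,\kappa,\cdot}$, these samples satisfy the cyclic monotonicity inequality \emph{termwise}; Rockafellar's classical extension theorem then produces a cyclically monotone, total multifunction $f\colon\b{R}^d \rightrightarrows \b{R}^d$ with $0 \in f(0)$ whose graph contains the samples, and $\Delta_f \in \bs{\Delta}_d(0,\kappa)$ is the desired destabilizing nonlinearity. Substituting $(w^*, z^*)$ into \eqref{eq:interconnectionRelation} and using the primal gap of $\epsilon$ produces the gain bound $1/\sqrt{8\epsilon}$, with the numerical constant accounted for by tracking the ratio between the primal quadratic gap and the induced $\ell^2$ gain.

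The main obstacle I anticipate is the termwise cyclic monotonicity check: cyclic monotonicity is a nonlinear combinatorial condition on the sample graph, and deriving it from a linear p.d.\ certificate is exactly where the Kronecker lifting is indispensable, because placing the Fourier modes in orthogonal coordinate directions causes the cyclic inequality to collapse mode-by-mode into the inequality dual to the positive definiteness guaranteed by Theorem \ref{thm:belov1}. A secondary technical issue is the passage from $\ell^2$ infeasibility to finite-$N$ LP infeasibility with an explicit gap; this I would handle using the state-space representation of $G$ to bound the truncation error uniformly in $N$.
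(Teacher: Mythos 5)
Your high-level plan tracks the paper reasonably well up through duality: discretize the frequency-domain inequality at the $N$-th roots of unity, pass the $\ell^2$-infeasibility to a finite LP by exploiting uniform continuity of $G$ on $\b{T}$ (that is the content of Lemma~\ref{thm:nonexistenceImpliesInfeasible}), and extract a nonnegative dual weight $\mu \in \b{R}^N$ from the LP~\eqref{eq:mainProgram}. Your proposal to put $\sqrt{\mu_k}$ in the $k$-th coordinate direction is, up to a unitary change of coordinates and up to taking a real square root, exactly the paper's factorization $Y = U^\top U$ of the circulant matrix~\eqref{eq:matrixY}; the Gram matrix of your samples and of the paper's columns $\bar y_j$ are the same. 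So the construction of $d$ and of the signal pair is essentially the paper's.

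The genuine gap is in your Step~3. You assert that the sample graph $\{(z_k^*, w_k^*)\}$ satisfies the cyclic monotonicity inequality \emph{termwise} and then invoke Rockafellar's classical extension theorem to obtain a monotone $f$ whose graph contains the samples exactly. If that were true, the closed loop~\eqref{eq:iqcRelationInterconnection} would have zero exogenous input and nonzero periodic internal signal, i.e.\ \emph{infinite} gain---but the theorem only promises a gain $\geq 1/\sqrt{8\eps}$, and the infinite-gain case occurs precisely when the optimal value of~\eqref{eq:mainProgram} is $t_*=0$ (Remark~\ref{rem:infiniteGain}). When $t_*>0$ the dual feasibility conditions~\eqref{eq:dualityConstraints} only yield the relaxed cross-correlation bounds~\eqref{eq:crossCorrelation}, which in turn give only \emph{$t_*/N$-approximate} cyclic monotonicity of the sample set (Part~3 of the proof of Lemma~\ref{lem:interpolationMonotone}). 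The paper therefore cannot put the samples on the graph of a monotone map; it needs the extension theorem for approximately cyclically monotone relations (Theorem~\ref{thm:cyclicallyMonotoneSubgradient}, giving an inclusion into an $\eps$-subdifferential) followed by the Br\o nsted--Rockafellar theorem (Theorem~\ref{thm:bronstedRockafellar}) to perturb each sample onto the graph of $\partial F$, at the cost of $O(\sqrt{t_*/N})$ errors. Those errors become the exogenous signals $\tilde x,\tilde y$ whose power is $\leq 4t_*/N$, and the ratio of $\operatorname{pow}(\bar y)^2 = 1/N$ to $\operatorname{pow}(\tilde x)^2 + \operatorname{pow}(\tilde y)^2 \leq 8t_*/N$ is exactly where $1/\sqrt{8\eps}$ comes from. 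Your proposal is silent on this perturbation step, yet claims the final bound as if it followed from an exact interpolation; the two are inconsistent. A secondary inaccuracy: the reason the Kronecker lifting enters is not that ``orthogonal coordinate directions make the cyclic inequality collapse mode-by-mode,'' but that the lifting via $Y = U^\top U$ turns every cyclic sum $\sum_j \<\bar y_{i_j}, \bar x_{i_{j+1}} - \bar x_{i_j}\>$ into a trace $-\tr((I-\Pi)TY)$ for a (sub)permutation matrix $\Pi$, and the dual constraints control exactly those traces through doubly hyperdominant matrices (Part~2). The invocation of Theorem~\ref{thm:belov1} on the dual side also has no analogue in the paper; that interpolation result is used only on the primal side, in the proof of Lemma~\ref{thm:nonexistenceImpliesInfeasible}.
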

	In short, the test provided by OZF-multipliers is exact up to a lifting of the original system $G$ to $G \otimes I_d$. Precisely,
	let $\epsilon_{\max}$ be the supremum of all $\epsilon\geq 0$ such that \eqref{eq:negativeMultiplier} is satisfied for some $M \in \c{H}_1$.
	Then the  Zames-Falb stability test is successful iff $\epsilon_{\max}>0$, and
	our main result implies
	$$\sup_{d\in\b{N},\,\Delta \in \bs{\Delta}_d(0,\kappa)} \norm{R(\Delta)} \geq\frac{1}{\sqrt{8\epsilon_{\max}}}.$$
	This not only shows robust instability of the loop in case of $\epsilon_{\max}=0$, but also
	reveals that $\epsilon_{\max}$ can be viewed as a stability margin in case it is positive.

	
	\subsection{Linear program formulation}
	\label{sec:linearProgram}
	In order to understand what the infeasibility of \eqref{eq:negativeMultiplier} entails, we restate \eqref{eq:negativeMultiplier} as a frequency domain inequality (FDI).
	We first set $\kappa = \infty$.
	It is easy to see that a generic $M \in \c{H}_1$ can be written as $M = I - H$ with $H$ being Toeplitz and doubly substochastic, i.e. $H = (h_{j-i})_{i,j=0}^\infty$ for some nonnegative sequence $h_{k}$ such that $\sum_{k=-\infty}^\infty h_k \leq 1$.
	Taking the z-transform of \eqref{eq:negativeMultiplier} yields the FDI
	\begin{align}
		\label{eq:positiveDefiniteFDI}
		\Re(G(z)(1-H(z)) \leq -\frac{\epsilon}{2} \text{\ for all\ } z \in \b{T}\,.
	\end{align}
	Here $H(z) = \sum_{k=-\infty}^\infty h_k z^{-k}$ is continuous and p.d. by Theorem \ref{thm:bochner} and satisfies $H(1) \leq 1$.
	Hence the test in Theorem \ref{thm:stabilityIQC} is equivalent to finding some continuous, p.d. $H:\b{T} \to \b{C}$ with \eqref{eq:positiveDefiniteFDI}.
	
	As a next step we relate the FDI \eqref{eq:positiveDefiniteFDI} to a family of finite-dimensional convex programs.
	For this purpose, we evaluate the FDI on the $N$-th roots of unity
	$z_j = \exp(2\pi ij/N)\in \b{T}$, $j=0,\ldots,N-1$.
	
	On the one hand, if there exists a function $H:\b{T} \to \b{C}$ with $H(1) \leq 1$ which satisfies \eqref{eq:positiveDefiniteFDI}, we infer from Section \ref{sec:positiveDefiniteFunctions} that,
	for every $N\in\b{N}$, the convex program
	\begin{align}
		\label{eq:pdProgram}
		\Re(G(z_j)(1-c_j)) \leq -\delta \text{\ for all\ } j=0,\ldots,N-1
	\end{align}
	with $\delta = \epsilon/2$ is feasible in the convex set of p.d. sequences $(c_{j})_{j=0}^{N-1}$ with $c_0 \leq 1$.
	Note that
	this observation is closely related to the duality results obtained in \cite{zhang2021duality}, which will be discussed further below.
	
	It has the following converse.
	
	\begin{lemma}
		\label{thm:nonexistenceImpliesInfeasible}
		Let $\epsilon > 0$ and assume that there is no $M \in \c{H}_1$ such that \eqref{eq:negativeMultiplier} is satisfied.
		Then for each $\delta > \epsilon/2$ there is some $N = N_\delta \in \b{N}_0$ such that (\ref{eq:pdProgram}) is infeasible.
	\end{lemma}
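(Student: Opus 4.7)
The plan is to prove the contrapositive. Suppose $\delta>\epsilon/2$ is fixed and the convex program \eqref{eq:pdProgram} admits, for every $N\in\b{N}$, a feasible positive definite sequence $\{c_j^{(N)}\}_{j=0}^{N-1}$ with $c_0^{(N)}\leq 1$. I will construct from a \emph{single} sufficiently large $N$ a multiplier $M\in\c{H}_1$ satisfying \eqref{eq:negativeMultiplier}, contradicting the hypothesis of the lemma. The starting observation, recorded right after \eqref{eq:positiveDefiniteFDI}, is that producing such an $M$ is equivalent to exhibiting a continuous positive definite function $H:\b{T}\to\b{C}$ with $H(1)\leq 1$ and the FDI $\Re(G(z)(1-H(z)))\leq -\epsilon/2$ on $\b{T}$.

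For each $N$, let $H_N:\b{T}\to\b{C}$ be the piecewise-linear interpolant of $\{c_j^{(N)}\}$ at the $N$-th roots of unity $z_j=e^{2\pi ij/N}$. By Theorem \ref{thm:belov1}, $H_N$ is continuous and positive definite, and by construction $H_N(1)=c_0^{(N)}\leq 1$. Bochner's theorem then furnishes a nonnegative summable sequence $\{h_k^{(N)}\}_{k\in\b{Z}}$ with $\sum_k h_k^{(N)}=H_N(1)\leq 1$ and $H_N(z)=\sum_k h_k^{(N)}z^k$, so the Toeplitz matrix $M_N:=I-(h_{i-j}^{(N)})_{i,j\geq 0}$ is doubly hyperdominant with diagonal entry $1-h_0^{(N)}\leq 1$ and therefore belongs to $\c{H}_1$.

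The key remaining step is to promote the grid-wise constraint $\Re(G(z_j)(1-c_j^{(N)}))\leq -\delta$ to the full-circle FDI. For $z=e^{i\omega}$ with $\omega\in[\omega_j,\omega_{j+1}]$, piecewise-linearity in the argument yields $H_N(z)=(1-t)c_j^{(N)}+tc_{j+1}^{(N)}$ for some $t\in[0,1]$, so $\Re(G(z)(1-H_N(z)))$ is the corresponding convex combination of $\Re(G(z)(1-c_j^{(N)}))$ and $\Re(G(z)(1-c_{j+1}^{(N)}))$. Replacing $G(z)$ by $G(z_j)$ or $G(z_{j+1})$ incurs an error of at most $2\sup_{|\omega-\omega'|\leq 2\pi/N}\abs{G(e^{i\omega})-G(e^{i\omega'})}$, using the free positive-definiteness bound $\abs{1-c_k^{(N)}}\leq 1+c_0^{(N)}\leq 2$. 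Since $G$ is a rational function with poles strictly inside the open unit disk, it is uniformly continuous on $\b{T}$ and this supremum tends to zero. Exploiting the strict slack $\delta>\epsilon/2$, one chooses $N$ so large that
\begin{equation*}
\Re(G(z)(1-H_N(z)))\leq -\delta+2\sup_{|\omega-\omega'|\leq 2\pi/N}\abs{G(e^{i\omega})-G(e^{i\omega'})}\leq -\epsilon/2
\end{equation*}
for all $z\in\b{T}$, yielding the sought $M_N\in\c{H}_1$ and the contradiction.

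The main obstacle is this last estimate, bridging the finite grid-wise inequality and the continuous FDI on all of $\b{T}$. It rests on two ingredients entering multiplicatively: the uniform bound $\abs{1-c_j^{(N)}}\leq 2$, free from positive-definiteness together with the normalization $c_0^{(N)}\leq 1$; and the uniform continuity of $G$ on the compact set $\b{T}$, afforded by the finite-dimensional state-space hypothesis. Without the strict gap $\delta-\epsilon/2>0$ the interpolation error could not be absorbed, which is precisely why the lemma is stated for every $\delta$ strictly above $\epsilon/2$ rather than at $\delta=\epsilon/2$.
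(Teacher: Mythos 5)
Your proof is correct and follows essentially the same route as the paper: form the piecewise-linear interpolant of the feasible p.d.\ sequence via Theorem~\ref{thm:belov1}, exploit the uniform bound $\abs{1-c_j^{(N)}}\leq 2$ coming from positive definiteness and $c_0^{(N)}\leq 1$, and absorb the interpolation error into the strict gap $\delta-\epsilon/2$ by uniform continuity of $G$ on $\b{T}$. The only cosmetic difference is that you spell out the Bochner step producing $M_N\in\c{H}_1$ explicitly and your error constant is slightly tighter ($2$ vs.\ the paper's $4$), neither of which affects the argument.
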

	
	The proof relies on multipliers with transfer functions $M(z) = H(1) - H(z)$ for some piecewise-linear positive definite $H:\b{T} \to \b{C}$.
	These are called piecewise-linear OZF-multipliers.
	Since they have non-rational transfer functions, they do not have a finite-dimensional state-space representation and are, hence, different from the popular so-called finite impulse response (FIR) multipliers \cite{carrasco2019convex,fetzer2017absolute}.\\
	Fig. \ref{fig:pwlNyquist} shows the Nyquist plot of a piecewise-linear Zames-Falb multiplier.
	\begin{figure*}[h]
		\centering
		\includegraphics[scale=0.7]{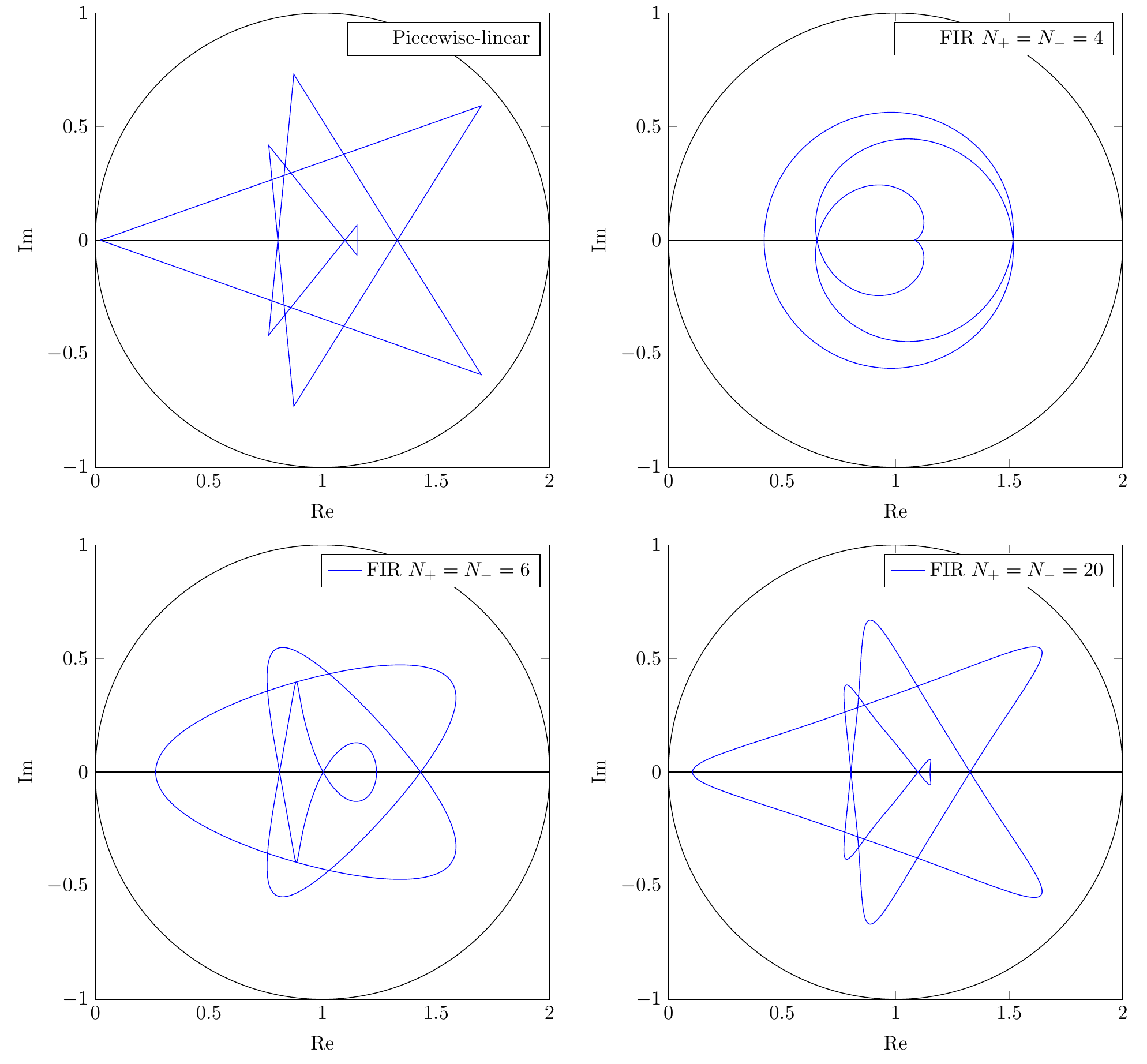}
		\caption{Nyquist plot of the piecewise-linear ZF-multiplier $H(z) = 1 - \sum_{k=-\infty}^\infty h_k z^k$ given by the p.d. sequence $c = \sqrt{N}V\alpha_*$ with optimal values $\alpha_*$ and $t_*$ in (\ref{eq:mainProgram}) for the plant $G_1(z) = \frac{-z}{z^2-1.8z+0.81}$, $\kappa = 0.3$ and $N = 9$ together with its FIR-approximations $1 - \sum_{k=-N_-}^{N_+} h_k z^k$.}
		\label{fig:pwlNyquist}
	\end{figure*}\\
	
	In order to further analyze the convex program (\ref{eq:pdProgram}) for a fixed $N \in \b{N}$,
	we parametrize the circulant matrix $(c_{j-k})_{j,k=0}^{N-1}$ by its eigenvalues \cite{gray2006toeplitz} and stack the coefficients $(c_j)_{j=0}^{N-1}$ as well as the eigenvalues $(\lambda_j)_{j=0}^{N-1}$ into column vectors $c$ and $\lambda$, respectively.
	This gives the relationship $c = V\lambda/\sqrt{N}$, where $V =(z_j^l)_{l,j=0}^{N-1}/\sqrt{N}$ is the DFT-matrix.
	Moreover, we define $Q = \operatorname{diag}(G(z_0),\ldots,G(z_{N-1}))$ and $e = (1,\ldots,1)^\top \in \b{R}^{N}$.
	
	With these ingredients, we consider the linear program
	\begin{align}
		\label{eq:mainProgram}
		\inf -t\text{\ \ s.t.\ }
		\Bigg\{\begin{split}
			t \in \b{R}\,, \alpha \in \b{R}^{N}\,, e^\top \alpha \leq 1\,,\alpha \geq 0\,, \\ \Re(Q(e-\sqrt{N}V\alpha)) \leq -t e
		\end{split}
	\end{align}
	{where we maximize the margin $t = \epsilon/2$ for the FDI \eqref{eq:positiveDefiniteFDI} evaluated at the $N$-th roots of unity.}
	Note that its optimal value $-t_*$ is nonpositive  since $\alpha = (1,0,\ldots,0)^\top$ and $t = 0$ are always feasible for \eqref{eq:mainProgram}.
	\begin{lemma}
		\label{thm:infeasibleImpliesOptimalValueBound}
		The optimal value $-t_*\leq 0$ of the program (\ref{eq:mainProgram}) satisfies $t_* \leq \delta$ iff (\ref{eq:pdProgram}) with strict inequalities is infeasible in the convex set of p.d. sequences $(c_j)_{j=0}^{N-1}$ with $c_0 \leq 1$.
	\end{lemma}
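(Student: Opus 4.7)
The plan is to show that the linear program (\ref{eq:mainProgram}) is merely (\ref{eq:pdProgram}) rewritten in the coordinates given by the spectral parametrization of p.d.\ sequences just introduced. The key observation is that $\alpha \mapsto c := \sqrt{N}V\alpha$ is a bijection from $\{\alpha \in \b{R}^N : \alpha \geq 0,\ e^\top\alpha \leq 1\}$ onto the convex set of p.d.\ sequences $(c_j)_{j=0}^{N-1}$ with $c_0 \leq 1$: the vector $\lambda = N\alpha$ is precisely the eigenvalue vector of the associated circulant, so $\alpha \geq 0$ encodes Hermitian positive semidefiniteness, while $c_0 = \tfrac{1}{N}\sum_j \lambda_j = e^\top\alpha$ matches the normalization on both sides. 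Under this bijection the $N$ scalar inequalities $\Re(G(z_j)(1-c_j)) \leq -\delta$ collapse into the vector inequality $\Re(Q(e-\sqrt{N}V\alpha)) \leq -\delta e$ appearing in (\ref{eq:mainProgram}) at the level $t = \delta$, and consequently
\begin{equation*}
t_* = \sup\{t \in \b{R} : \exists\, \alpha \geq 0,\ e^\top\alpha \leq 1,\ \Re(Q(e-\sqrt{N}V\alpha)) \leq -te\}.
\end{equation*}

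Both implications of the equivalence now follow by contrapositive from this supremum representation. If the strict version of (\ref{eq:pdProgram}) admits a feasible sequence $c$, I pull it back to the corresponding $\alpha$; strictness yields some $\eta > 0$ with $\Re(Q(e-\sqrt{N}V\alpha)) \leq -(\delta+\eta)e$, so the pair $(\delta+\eta,\alpha)$ is feasible for (\ref{eq:mainProgram}) and forces $t_* \geq \delta+\eta > \delta$. Conversely, $t_* > \delta$ produces, by the definition of supremum, a feasible pair $(t,\alpha)$ with $t > \delta$, and the corresponding $c = \sqrt{N}V\alpha$ is then a p.d.\ sequence solving the strict version of (\ref{eq:pdProgram}) at level $\delta$.

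I do not anticipate a genuine obstacle: the lemma is essentially an identification of two optimization problems in different coordinates. The only points requiring care are verifying the bijection — in particular the equivalence $c_0 \leq 1 \Leftrightarrow e^\top \alpha \leq 1$, which hinges on the rescaling $\alpha = \lambda/N$ — and reconciling strict with non-strict inequalities via the standard $\eta$-perturbation. The substantive use of this LP reformulation, and of its dual, will come in the subsequent sections where the destabilizing nonlinearity is constructed explicitly.
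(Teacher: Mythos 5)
Your proof is correct and follows essentially the same route as the paper's: identify $\alpha = \lambda/N$ so that $c = \sqrt{N}V\alpha$, translate each constraint of \eqref{eq:mainProgram} into the corresponding constraint of \eqref{eq:pdProgram}, and then compare the optimal value $t_*$ with $\delta$ by feeding strictly feasible points across the identification (handling strict vs.\ non-strict inequalities via a small perturbation/supremum argument, just as the paper does by taking $-t = \max_j \Re((1-c_j)G(z_j))$ and picking $t \in (\delta, t_*)$).
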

	
	If combining Lemma~\ref{thm:nonexistenceImpliesInfeasible} with
	Lemma~\ref{thm:infeasibleImpliesOptimalValueBound}, we have shown in this section that the nonexistence of a multiplier $M \in \c{H}_1$ satisfying \eqref{eq:negativeMultiplier} implies the existence of some $N \in \b{N}$ for which the optimal value $-t_*$ of \eqref{eq:mainProgram} satisfies $t_* \in [0,\eps]$.

	%
	
	\subsection{Construction of destabilizing nonlinearities}
	
	\label{sec:constructionUncertainty}
	
	We now assume that (\ref{eq:mainProgram}) has an optimal value $-t_*$ with $t_* \in [0,\delta]$ and proceed to construct some nonlinearity which permits to prove
	Theorem~\ref{thm:mainNecessity}.
	
	For this purpose we consider the dual program of (\ref{eq:mainProgram}), which is given by
	\begin{align*}
		\sup \Re(\mu^\top Qe)-\eta \text{\ s.t.\ }
		\begin{cases}
			\mu \in \b{R}^N\,, \eta \in \b{R}\,,  \mu, \eta \geq 0 \\
			\sqrt{N} \Re(\mu^\top Q V) \leq \eta e^\top \\
			\mu^\top e = 1\,. \\
		\end{cases}
	\end{align*}
	By strong duality there exist $\mu \in \b{R}^N$ and $\eta \in \b{R}$ with
	\begin{gather}
		\label{eq:dualityConstraints}
		\begin{gathered}
			\eta -\Re(\mu^\top Qe) = t_* \leq \delta\,, \quad
			\mu^\top e = 1\,, \\
			-\sqrt{N}\Re(\mu^\top QV) + \eta e^\top \geq 0\,, \quad \mu \geq 0\,, \quad \eta \geq 0\,.
		\end{gathered}
	\end{gather}
	In order to extract the necessary information from these dual constraints, we define the vectors
	\begin{align}
		\label{eq:vectorsDual}
		y =  V\mu^{1/2} \in \b{R}^N \text{\ \ and\ \ } x = VQ\mu^{1/2} \in \b{R}^N
	\end{align}
	by the inverse DFT of $\mu^{1/2} = (\mu_{j}^{1/2})_{j=0}^{N-1}$ and $Q\mu^{1/2}$, respectively.
	Note that these are related by $x = Ty$, where $T = VQV^* \in \b{R}^{N \times N}$ is circulant.
	Now if we set
	\begin{align}
		\label{eq:matrixY}
		Y = \frac{1}{N}\sum_{k=0}^{N-1} S^k yy^\top S^{-k}\,,
	\end{align}
	then $Y$ is positive semi-definite and, hence, can be factored as $Y = U^\top U$ for some $U = \mat{\bar{y}_0 & \cdots & \bar{y}_{N-1}} \in \b{R}^{d \times N}$ and $d \leq N$.
	Denoting
	
	\begin{align}
		\label{eq:interpolationVectors}
		\begin{aligned}
			\bar{y} &= (\bar{y}_0^\top,\ldots,\bar{y}_{N-1}^\top)^\top \in (\b{R}^{d})^N\,, \\
			\bar{x} &= (\bar{x}_0^\top,\ldots,\bar{x}_{N-1}^\top)^\top := (T \otimes I_d)\bar{y} \in (\b{R}^{d})^N\,, \\
			&\bar{y}_N = \bar{x}_N := 0,
		\end{aligned}
	\end{align}
	we have the following result.
	
	\begin{lemma}
		\label{lem:interpolationMonotone}
		For the constructed $\bar{x}$, $\bar{y}$ (under the assumptions of this section),
		there exist some $\hat{x}_k, \hat{y}_k \in \b{R}^d$, $k=0,\ldots,N$, and $f \in \operatorname{slope}_d[0,\infty]$ such that $\operatorname{dom}(f) = \b{R}^d$, $0 \in f(0)$ and
		\begin{gather}
			\label{eq:distanceSignals}
			\begin{gathered}
				\hat{y}_k - \hat{y}_N \in f(\hat{x}_k - \hat{x}_N)\,, \\
				\norm{\bar{x}_k - \hat{x}_k}^2 \leq \rho\,,
				\norm{\bar{y}_k - \hat{y}_k}^2 \leq \rho\,,
			\end{gathered}
		\end{gather}
		for $k=0,\ldots,N$, where $\rho = t_*/N$.
	\end{lemma}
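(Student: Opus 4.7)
The plan is to construct $(\hat{x}_k, \hat{y}_k)_{k=0}^{N}$ satisfying the norm bounds of \eqref{eq:distanceSignals} and such that the translated differences $(\hat{x}_k - \hat{x}_N, \hat{y}_k - \hat{y}_N)_{k=0}^N$ form a cyclically monotone sequence. Rockafellar's integration theorem then provides a finite closed convex function $\phi:\b{R}^d\to\b{R}$ (concretely the maximum of the affine maps $x\mapsto\<\hat y_k-\hat y_N, x - (\hat x_k-\hat x_N)\> + c_k$ with potentials $c_k$ supplied by cyclic monotonicity) such that $\hat y_k - \hat y_N \in \partial \phi(\hat x_k - \hat x_N)$ for every $k$. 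Taking $f := \partial\phi$, the multifunction $f$ is cyclically monotone, total on $\b{R}^d$ (as the subdifferential of a real-valued convex function), and contains $(0,0)$ because the $k=N$ point is the origin; hence $f\in\operatorname{slope}_d[0,\infty]$ with all the required properties.

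The heart of the argument is a cyclic-sum estimate drawn from the dual constraints \eqref{eq:dualityConstraints}. A direct calculation with $y = V\mu^{1/2}$ shows that the averaging in \eqref{eq:matrixY} gives $Y = V\operatorname{diag}(\mu)V^*$, which is circulant; combined with the circulant $T = VQV^*$ this yields
\begin{equation*}
    \<\bar{y}_k, \bar{x}_l\> = h(l-k), \quad h(s) := \frac{1}{N}\sum_{j=0}^{N-1}\mu_j G(z_j) z_j^{s}.
\end{equation*}
The $s$-th component of $\sqrt{N}\Re(\mu^\top QV) \leq \eta e^\top$ together with $\eta - \Re(\mu^\top Qe) = t_*$ then forces $h(s) - h(0) \leq t_*/N = \rho$ for every shift $s$. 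Consequently, for every cycle $(k_0,\ldots,k_n = k_0)$ in $\{0,\ldots,N-1\}$,
\begin{equation*}
    \sum_{i=0}^{n-1}\<\bar{y}_{k_i}, \bar{x}_{k_{i+1}} - \bar{x}_{k_i}\> = \sum_{i=0}^{n-1}\bigl[h(k_{i+1}-k_i) - h(0)\bigr] \leq n\rho,
\end{equation*}
i.e., the cyclic-monotonicity defect of $(\bar x_k, \bar y_k)$ is at most $\rho$ per edge.

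To cancel this uniform defect I would exploit that circulance also forces a rigid orbit structure: one verifies that $\bar{y}_{k+1} = Q\bar{y}_k$ and $\bar{x}_{k+1} = Q\bar{x}_k$ for one orthogonal $Q$ with $Q^N = I$, and in particular $\|\bar{y}_k\|^2 = 1/N$ is independent of $k$. A perturbation of the form $\hat{x}_k = \bar{x}_k + \alpha\bar{y}_k$, $\hat{y}_k = \bar{y}_k + \beta\bar{x}_k$ with $\alpha, \beta = O(\sqrt{\rho N})$ keeps $\|\hat x_k - \bar x_k\|^2, \|\hat y_k - \bar y_k\|^2 \leq \rho$, while the identity $\<a, a-b\> = \tfrac{1}{2}(\|a\|^2 - \|b\|^2) + \tfrac12\|a - b\|^2$ converts the perturbation into strictly negative quadratic contributions $-\tfrac\alpha 2 \sum_i \|\bar{y}_{k_{i+1}} - \bar{y}_{k_i}\|^2 - \tfrac\beta 2\sum_i \|\bar{x}_{k_{i+1}} - \bar{x}_{k_i}\|^2$ in the cyclic sum; the remaining bilinear cross term is again bounded by another use of $h(-s) - h(0) \leq \rho$. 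Finally $(\hat x_N, \hat y_N)$ is placed inside the $\sqrt\rho$-ball at the origin so that the enlarged sequence on $\{0,\ldots,N\}$ remains cyclically monotone.

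The hard part will be to verify that this quadratic regularization dominates the $n\rho$ defect for \emph{every} cycle — including short ones or cycles concentrated on near-coincident points — and not only for the natural one. Here the rigid orbit structure is decisive: decomposing $\bar y_0$ in the eigenbasis of $Q$, whose eigenvalues are precisely the $z_j$ with $\mu_j\neq 0$, and invoking Parseval identifies $\sum_i\|(Q^{\Delta_i} - I)\bar y_0\|^2$ with a Fourier quantity directly comparable to $\sum_i[h(\Delta_i) - h(0)]$, delivering the uniform domination after balancing $\alpha$ and $\beta$.
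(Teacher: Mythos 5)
Your first stage is correct and in fact cleaner than the paper's: the Fourier identity $\<\bar y_k,\bar x_l\>=h(l-k)$ with $h(s)=\tfrac1N\sum_j\mu_j G(z_j)z_j^s$, together with the dual constraints written as $h(s)-h(0)\leq\rho$, directly gives $\rho$-approximate cyclic monotonicity of $\{(\bar x_k,\bar y_k)\}_{k<N}$. The paper reaches the same conclusion (Parts 1--3 of its proof) via trace inequalities with circulant doubly hyperdominant and subpermutation matrices, and in particular handles the added origin point $(\bar x_N,\bar y_N)=(0,0)$ explicitly via subpermutations with a ``missing row''; your one-line remark about placing $(\hat x_N,\hat y_N)$ in a $\sqrt\rho$-ball around the origin glosses over this and would need a corresponding estimate.

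The genuine gap is in your second stage. You try to replace the paper's use of the Br{\o}nsted--Rockafellar theorem by the explicit perturbation $\hat x_k=\bar x_k+\alpha\bar y_k$, $\hat y_k=\bar y_k+\beta\bar x_k$. The norm constraints in \eqref{eq:distanceSignals} force $\alpha\lesssim\sqrt{\rho N}$ and $\beta\lesssim\sqrt{\rho N}$ (since $\|\bar y_k\|^2=1/N$ and $\|\bar x_k\|^2=\tfrac1N\sum_j\mu_j|G(z_j)|^2$), so the perturbation budget collapses as $t_*\to 0$. But the cyclic defect you need to cancel is governed by the \emph{values of} $G$, a quantity independent of $t_*$: for instance for the two-cycle $(k,k+s,k)$ the defect is $-\tfrac2N\sum_j\mu_j\Re G(z_j)(1-\Re z_j^s)$, while the quadratic corrections contribute $\tfrac{\alpha}{N}\sum_j\mu_j(1-\Re z_j^s)+\tfrac{\beta}{N}\sum_j\mu_j|G(z_j)|^2(1-\Re z_j^s)$, so domination requires $\alpha+\beta|G(z_j)|^2\geq-\Re G(z_j)$ on the support of $\mu$, which fails once $\alpha,\beta\lesssim\sqrt{t_*}$ is small. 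For cycles of length $\geq 3$ the situation is worse because the $\Im G(z_j)\sum_i\sin(2\pi j s_i/N)$ part of the defect is not sign-definite and cannot be dominated by the sign-definite $\sum_i(1-\cos)$ terms that your perturbation produces. The paper's proof avoids all of this by going through two nontrivial convex-analysis results that you do not invoke: (i) a generalization of Rockafellar's integration theorem stating that $\epsilon$-approximate cyclic monotonicity is equivalent to containment in the $\epsilon$-subdifferential of a proper lsc convex $F$ (Theorem \ref{thm:cyclicallyMonotoneSubgradient}), and (ii) the Br{\o}nsted--Rockafellar theorem (Theorem \ref{thm:bronstedRockafellar}), which converts each $\epsilon$-subgradient pair into a genuine subgradient pair at $\sqrt\epsilon$ distance. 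That second step is variational in nature (Ekeland-type), and it is not reproducible by a single global linear change of coordinates applied uniformly to all $k$ as you propose; this is precisely the missing ingredient.
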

	
	Next, since $\bar{x} = (T \otimes I_d)\bar{y}$ with
	\begin{align*}
		&(T \otimes I_d)= \\
		&\quad (V \otimes I_d)\operatorname{diag}(G(z_0) \otimes I_d,\ldots,G(z_{N-1}) \otimes I_d)(V \otimes I_d)^*\,,
	\end{align*} 
	
	it follows from Lemma \ref{lem:periodicLTI} that there is some initial state $\bar{\xi}_0$ of $G \otimes I_d$ such that $(\bar{x}_{k \mod N})_{k=0}^\infty$ is the output of $G \otimes I_d$ to the input $(\bar{y}_{k \mod N})_{k=0}^\infty$.
	Setting $\Delta = \Delta_f$,
	\begin{align*}
		\hat{w}_k = \hat{y}_k - \hat{y}_N\,, \quad
		\hat{z}_k = \hat{x}_k - \hat{x}_N\,, \\
		\tilde{x}_k = \hat{z}_k - \bar{x}_k\,, \quad
		\tilde{y}_k = \bar{y}_k - \hat{w}_k\,,
	\end{align*}
	for $k = 0,\ldots,N-1$ and by $N$-periodic continuation, we obtain signals that satisfy the loop equation corresponding to Fig.~\ref{fig:instableConfig}.
	\begin{figure}[h]%
		\centering
		\includegraphics[scale=1]{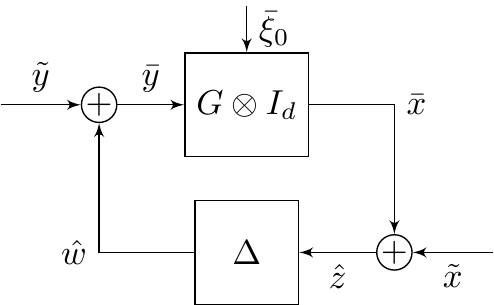}
		\caption{Configuration for the constructed signals.}
		\label{fig:instableConfig}
	\end{figure}
	
	Moreover, from \eqref{eq:distanceSignals} and $\bar x_N=0$, $\bar y_N=0$ it follows that
	\begin{gather*}
		\quad\operatorname{pow}(\tilde{x})^2
		\leq \frac{1}{N}\sum_{k=0}^{N-1} (\norm{\bar{x}_k - \hat{x}_k} + \norm{\hat{x}_N})^2 \\
		\leq \frac{2}{N}\sum_{k=0}^{N-1} (\norm{\bar{x}_k - \hat{x}_k}^2 + \norm{\bar{x}_N-\hat{x}_N}^2)
		\leq 4 \rho
		= 4 \frac{t_*}{N}
	\end{gather*}
	and analogously $\operatorname{pow}(\tilde{y})^2 \leq 4t_*/N$,
	while $N\operatorname{pow}(\bar{y})^2 =\tr(U^\top U) = \tr(Y) = \tr(yy^\top) = \<y,y\> = \mu^\top e = 1$.
	
	\subsection{Proof of Theorem \ref{thm:mainNecessity}}
	\label{sec:proofMain}
	Now we can present the proof of our main result.\\
	\begin{proof}
		Assume that there is no $M \in \c{H}_1$ such that \eqref{eq:negativeMultiplier} is satisfied.
		From Section \ref{sec:linearProgram} we infer $t_* \in [0,\epsilon]$, where $-t_*$ is the optimal value of \eqref{eq:mainProgram}.
		Following Section \ref{sec:constructionUncertainty} we obtain some $\Delta \in \bs{\Delta}_d(0,\infty)$, periodic signals $\tilde{x}, \tilde{y}$ as well as some inital state $\bar{\xi}_0$ of $G \otimes I_d$ such that the loop equations corresponding to Fig. \ref{fig:instableConfig} are satisfied.
		The latter is equivalent to the same configuration where $\tilde{x}$ is replaced by $\tilde{x} + \tilde{t}$, with $\tilde{t}_k = (CA^k \otimes I_d)\bar{\xi}_0$, and $\bar{\xi}_0$ is replaced by $0$.
		Since $G$ is stable, $A$ is Schur and therefore $\operatorname{pow}(\tilde{x}+\tilde{t}) = \operatorname{pow}(\tilde{x}) \leq 4t_*/N$.
		Thus, by Lemma \ref{lem:powerGainRelation}, the gain of the interconnection \eqref{eq:iqcRelationInterconnection} of $G \otimes I_d$ and $\Delta$ is at least
		\begin{align*}
			\frac{\operatorname{pow}(\bar{y})}{\sqrt{\operatorname{pow}(\tilde{x}+\tilde{t})^2 + \operatorname{pow}(\tilde{y})^2}}
			\geq \frac{1/\sqrt{N}}{\sqrt{8t_*/N}}
			= \frac{1}{\sqrt{8t_*}}
			\geq \frac{1}{\sqrt{8\epsilon}}\,.
		\end{align*}
		The result for $\kappa\in(0,\infty)$
		then follows from a standard loop transformation and can be found in \cite{kharitenko2022some}.
	\end{proof}
	
	\begin{remark}
		\label{rem:infiniteGain}
		If $t_* = 0$ holds for \eqref{eq:mainProgram}, the above construction shows that we can find nonzero internal $N$-periodic oscillations and the gain of the interconnection is actually infinite.
	\end{remark}
	
	\begin{remark}
		\label{rem:hilbertSpace}
		If we factor $Y$ defined in \eqref{eq:matrixY} as $Y = U^\top U$ for some operator $U\in \c{L}(\b{R}^N,\ell^2)$ (i.e. $d = \infty$) and proceed with our construction, we obtain the following result: If there is no $\epsilon > 0$ and $M \in \c{H}_1$ with \eqref{eq:negativeMultiplier}, then for each $K > 0$ there is some $\Delta \in \bs{\Delta}_{\infty}(0,\kappa)$ (where $\bs{\Delta}_{\infty}(0,\kappa)$ denote the slope-restricted nonlinearities on the Hilbert space $\ell^2$ \cite{bauschke2011convex}) such that the gain of the interconnection between $G \otimes I_{\ell^2}$ and $\Delta$ is at least $K$, i.e.
		the criterion in Theorem \ref{thm:stabilityIQC} is exact for systems of the form $G \otimes I_{\ell^2}$.
	\end{remark}
	
	\section{Numerical Example}
	
	Consider the plant
	\begin{align*}
		G_d(z) = \frac{1.1z + 0.6}{z^2+1.8z+0.9} \otimes I_d
	\end{align*}
	and $\Delta \in \bs{\Delta}_d(0,\kappa)$.
	The Nyquist value is given by $\kappa = 2.17$.
	
	Moreoever, there exists some multiplier proving that the interconnection \eqref{eq:iqcRelationInterconnection} is stable for all $d$ if $\kappa \leq 1.86$.
	However if $\kappa > 1.86$, then \eqref{eq:mainProgram} has the optimal value $t_* = 0$ for $N = 5$ and hence there exists no corresponding multiplier satisfying \eqref{eq:negativeMultiplier} for any $\epsilon > 0$.
	In particular, by Remark \ref{rem:infiniteGain}, we can construct some $\Delta \in \bs{\Delta}_5(0,\kappa)$ such that the interconnection of $G_5$ and $\Delta$ has infinite gain.
	Note that for $\kappa > 2.15$ the original interconnection ($d = 1$) is unstable and a corresponding nonlinearity $\Delta \in \bs{\Delta}_1(0,\kappa)$ can be constructed by the method in \cite{seiler2020construction}.

	\section{Discussion}
	
	\label{sec:discussion}
	
	\subsection{SISO exactness and results for SSV}
	
	The question about whether the multipliers are exact for the original SISO interconnection remains unanswered by Theorem \ref{thm:mainNecessity}, since, in general, $d$ will be larger than $1$.
	It is easy to see that if the interconnection between $G \otimes I_d$ and $\bs{\Delta}_d(\lambda,\kappa)$ is robustly stable for some $d$, then so is the interconnection between $G$ and $\bs{\Delta}_1(\lambda,\kappa)$.
	The converse may or may not be true.
	
	In \cite{seiler2020construction} it is shown that if for some coprime $\alpha, \beta \in \b{N}_0$ such that $\alpha < \beta$ the main plant $G$ satisfies the phase constraint
	\begin{align}
		\label{eq:phaseConstraint}
		-\frac{\pi}{N} \leq \arg(G(\exp(i\pi \tfrac{\alpha}{\beta})) \leq \frac{\pi}{N}\,,
	\end{align}
	where $N = 2\beta$ if $\alpha$ is odd and $N = \beta$ if $\alpha$ is even, a nonlinearity $\Delta \in \bs{\Delta}_1(0,\kappa)$ can be constructed such that the interconnection \eqref{eq:iqcRelationInterconnection} has infinite gain.
	One can show that (\ref{eq:phaseConstraint}) implies $t_* = 0$ in \eqref{eq:mainProgram}.
	This reveals that our ``instability criterion'' in Remark \ref{rem:infiniteGain} is not less conservative than the one given in \cite{seiler2020construction}.
	But it is the construction of a SISO nonlinearity ($d = 1$)
	in \cite{seiler2020construction} that makes this approach different from ours.
	
	The procedure to lift the interconnection from $G$ to $G \otimes I_d$ is reminiscent of a result shown in \cite{bercovici1996structured, ball2016bounded} for the structured singular value (SSV).
	To state it, we define for $M \in \b{C}^{n \times n}$ and the complex block structure $\bs{\Delta} = \{\operatorname{diag}(\delta_1 I_{n_1},\ldots,\delta_{r} I_{n_r},\Delta_{r+1},\ldots,\Delta_{r+c}) \mid \delta_j \in \b{C}\,, \Delta_{j} \in \b{C}^{n_j \times n_j}\} \subseteq \b{C}^{n \times n}$ the SSV and its convex upper bound:
	\begin{align*}
		\mu_{\bs{\Delta}}(M) &= \inf\{\norm{\Delta} \mid \Delta \in \bs{\Delta}\,, -1 \in \sigma(M\Delta)\}\,, \\
		\hat{\mu}_{\bs{\Delta}}(M) &= \inf\{\norm{XMX^{-1}} \mid X \in \bs{\Delta}'\,, X \text{\ invertible}\}\,.
	\end{align*}
	Here, $\bs{\Delta}' = \{X \in \b{C}^{n \times n} \mid \forall \Delta \in \bs{\Delta}:\;X\Delta = \Delta X\}$.
	\begin{theorem}[Theorem 4.1, \cite{bercovici1996structured}]
		For $d = \sum_{j=1}^r n_j^2 + \sum_{j=r+1}^{r+c} n_j$ and $\bs{\Delta}_{d} = \{(\Delta_{ij})_{i,j=1}^d \mid \Delta_{ij} \in \bs{\Delta}\}$,
		\begin{align*}
			\hat{\mu}_{\bs{\Delta}}(M) = \mu_{\bs{\Delta}_{d}}(I_d \otimes M)\,.
		\end{align*}
	\end{theorem}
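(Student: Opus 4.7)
My plan is to establish the claimed equality via two separate inequalities, one straightforward and one substantive. The easy direction will follow from the tensor structure together with the standard relation between $\mu$ and $\hat\mu$; the hard direction requires a convex duality / GNS-type construction to extract a destabilizing perturbation $\tilde\Delta \in \bs{\Delta}_d$ from a dual witness of the scaling infimum over $\bs{\Delta}'$, with the precise value of $d$ emerging from a dimension count tied to the block structure.

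For the easy direction, I would first observe that for any invertible $X \in \bs{\Delta}'$, the matrix $I_d \otimes X$ lies in the commutant $(\bs{\Delta}_d)'$: for any $\tilde\Delta = (\Delta_{ij})_{i,j=1}^d \in \bs{\Delta}_d$, the $(i,j)$-block of $(I_d \otimes X)\tilde\Delta$ is $X\Delta_{ij}$ while that of $\tilde\Delta(I_d \otimes X)$ is $\Delta_{ij}X$, and these coincide since each $\Delta_{ij} \in \bs{\Delta}$ commutes with $X$. Combining $(I_d \otimes X)(I_d \otimes M)(I_d \otimes X)^{-1} = I_d \otimes XMX^{-1}$ with $\|I_d \otimes A\| = \|A\|$ yields $\hat\mu_{\bs{\Delta}_d}(I_d \otimes M) \leq \hat\mu_{\bs{\Delta}}(M)$, and the standard relation between $\mu$ and $\hat\mu$ applied to the lifted problem closes this direction.

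For the hard direction, I would dualize the scaling infimum. Writing $\hat\mu_{\bs{\Delta}}(M)^2 = \inf\{\gamma^2 : M^*PM \leq \gamma^2 P,\, 0 < P \in \bs{\Delta}'\}$ (using that $P = X^*X$ ranges over positive definite elements of the $*$-algebra $\bs{\Delta}'$), the dual becomes a supremum over positive linear functionals on the $*$-algebra generated by $M$ and $\bs{\Delta}'$, obtained by Hahn-Banach separation of the convex cone $\{\gamma^2 P - M^*PM : P > 0,\, P \in \bs{\Delta}'\}$ from the origin at criticality. A GNS-type construction then converts an optimal functional into a vector state on a finite-dimensional Hilbert space $\c{H}$, and the critical count $d = \sum_{j=1}^r n_j^2 + \sum_{j=r+1}^{r+c} n_j$ should emerge as the dimension of the regular representation of $\bs{\Delta}'$ restricted to the orbit data of $M$: the repeated scalar blocks $\delta_j I_{n_j}$ contribute $n_j^2$ via their full commutant $\b{C}^{n_j \times n_j}$, while the full blocks $\Delta_j \in \b{C}^{n_j \times n_j}$ contribute $n_j$ from the rank of a minimum-norm destabilizer. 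Unfolding the GNS vectors into a $d \times d$ block matrix whose entries respect the structure of $\bs{\Delta}$, guided by the isotypic decomposition of $\bs{\Delta}'$, should produce the desired $\tilde\Delta \in \bs{\Delta}_d$ destabilizing $I_d \otimes M$ at the optimal level.

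The main obstacle will be exactly this reconstruction step: certifying that the GNS Hilbert space embeds into $\b{C}^d$ for the precise $d$ claimed, and that the block-matrix unfolding of the vector state genuinely has entries in $\bs{\Delta}$ (respecting both the scalar-block constraints $\delta_j I_{n_j}$ and the full-block constraints). A subsidiary concern is attainment of the dual optimum over $\bs{\Delta}'$, which is not norm-bounded; I would handle this by passing from approximate optima to a limit via a perturbation/compactness argument on the projective closure of the scaling orbit, or by normalizing with a trace condition so the dual feasible set becomes compact.
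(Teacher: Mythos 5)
The paper you are working from does not prove this theorem at all. It appears verbatim as an external citation in the Discussion section (Section 6.1) purely to motivate the conjecture that the OZF stability test is conservative for $d=1$; the attribution ``Theorem 4.1, \cite{bercovici1996structured}'' indicates the proof lives entirely in Bercovici, Foias, and Khargonekar's paper, not here. So there is no in-paper proof to compare your sketch against.

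On its own merits, your easy direction is correct and is the standard argument: $I_d\otimes X\in(\bs{\Delta}_d)'$ for $X\in\bs{\Delta}'$, together with $\|I_d\otimes A\|=\|A\|$, gives $\hat\mu_{\bs{\Delta}_d}(I_d\otimes M)\leq\hat\mu_{\bs{\Delta}}(M)$, and $\mu\leq\hat\mu$ on the lifted problem closes it. The hard direction, however, is not a proof but a declaration of intent. Two gaps are fatal at the level of rigor. First, your dimension-count heuristic is not even internally consistent: you attribute $n_j^2$ to the commutant of $\delta_j I_{n_j}$ (which is indeed $\b{C}^{n_j\times n_j}$), but then attribute $n_j$ to full blocks ``from the rank of a minimum-norm destabilizer,'' which is a different kind of object entirely — the commutant of a full block is one-dimensional, so the heuristic as stated would predict $\sum n_j^2 + c$, not $\sum n_j^2 + \sum n_j$. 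The precise origin of $d$ is the crux of the theorem and cannot be left to ``should emerge.'' Second, the reconstruction step — converting a dual optimal functional into an actual $\tilde\Delta\in\bs{\Delta}_d$ of the right norm whose entries respect the scalar-repeated and full-block constraints, and such that $-1\in\sigma((I_d\otimes M)\tilde\Delta)$ — is exactly the theorem's content, and you identify it as the obstacle without offering any construction. The Bercovici--Foias--Khargonekar argument organizes this very differently, via a careful analysis of the $*$-algebra generated by the optimal scalings and a dilation/lifting construction calibrated to make the worst case attainable; the GNS and Hahn--Banach ingredients you gesture at appear, but the specific machinery that pins down $d$ and produces $\tilde\Delta$ in $\bs{\Delta}_d$ is absent from your sketch. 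If you intend to actually prove this, you will need to work through their construction; as written, your hard direction does not close.
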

	Since it is well-known that, in general, $\mu_{\bs{\Delta}}(M) \neq \hat{\mu}_{\bs{\Delta}}(M)$, we conjecture that the criterion in Theorem \ref{thm:stabilityIQC} is not necessary for robust stability of \eqref{eq:iqcRelationInterconnection} with $d = 1$.

	\subsection{Linear Program \eqref{eq:mainProgram}}
	
	The linear program \eqref{eq:mainProgram} appeared in other publications.
	The following result is given in \cite{zhang2021duality}.
	
	\begin{theorem}[Proposition 1, \cite{zhang2021duality}]
		Let $G$ be a SISO, bounded, LTI system.
		If there exists some $N \in \b{N}$ and $\mu \in \b{R}^N$ with $\mu\neq 0$,
		$\mu \geq 0$ such that
		\begin{align}
			\label{eq:nonexistenceConditionZhang}
			\sum_{j=0}^{N-1} \mu_{j} \Re(G(z_j)(1-z_j^{-k})) \geq 0
		\end{align}
		holds for all $k=0,\ldots,N-1$, then there exists no OZF-multiplier $M$ with $\Re(G(z)M(z)) < 0$ for all $z \in \b{T}$.
	\end{theorem}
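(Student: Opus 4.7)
The plan is to derive a contradiction with the putative existence of an OZF-multiplier $M\in\c{H}$ satisfying $\Re(G(z)M(z))<0$ on $\b{T}$, by testing this strict frequency-domain inequality against $\mu$ at the $N$-th roots of unity. The key structural observation is that the doubly-hyperdominant Toeplitz coefficients $m_k\leq 0$ (for $k\neq 0$) together with $M(1)=\sum_k m_k\geq 0$ permit the representation
\[
M(z)=M(1)+\sum_{k\neq 0}(-m_k)(1-z^{-k}),
\]
a nonnegative combination of the constant $1$ and of the ``building blocks'' $(1-z^{-k})$ that appear in the hypothesis. Since $z_j^{-k}$ is $N$-periodic in $k$, grouping coefficients by residue class modulo $N$ collapses this at $z=z_j$ to
\[
M(z_j)=M(1)+\sum_{r=1}^{N-1}\tilde a_r\,(1-z_j^{-r}),\qquad \tilde a_r:=\sum_{\substack{k\equiv r \mod N\\ k\neq 0}}(-m_k)\geq 0.
\]

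Next I would multiply the strict inequality $\Re(G(z_j)M(z_j))<0$ by $\mu_j$ and sum over $j$. Since $\mu\geq 0$ and $\mu\neq 0$ the resulting pairing is strictly negative, and substituting the decomposition above yields
\[
M(1)\,A\;+\;\sum_{r=1}^{N-1}\tilde a_r\sum_{j}\mu_j\Re\bigl(G(z_j)(1-z_j^{-r})\bigr)\;<\;0,\qquad A:=\sum_j\mu_j\Re(G(z_j)).
\]
Each inner sum in the second term is $\geq 0$ by the hypothesis applied with $k=r$, and all coefficients $\tilde a_r, M(1)$ are nonnegative, so the first term alone must be strictly negative. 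Together with $M(1)\geq 0$ this forces $A<0$.

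To close the argument I would invoke the reduction $\mu\mapsto\mu-\mu_0 e_0$, which preserves every inequality in the hypothesis since the factor $(1-z_0^{-k})$ vanishes identically in $k$. Assuming the hypothesis is nonvacuous (i.e., $\mu$ is not concentrated at $j=0$, which is necessary for any informative statement about $G$), we may therefore assume $\mu_0=0$. Summing the hypothesis over $k=0,\ldots,N-1$ and exploiting the orthogonality $\sum_{k=0}^{N-1}z_j^{-k}=N\delta_{j,0}$ then gives $A=\sum_{j\neq 0}\mu_j\Re(G(z_j))\geq 0$, contradicting $A<0$. The main obstacle is precisely this reduction: without a tacit nondegeneracy on $\mu$, the hypothesis is tautologically satisfied by $\mu=\mu_0 e_0$ for any $\mu_0>0$ (since $z_0=1$ annihilates the integrand), so the genuine informative content of the hypothesis is that some $\mu_j$ with $j\neq 0$ is strictly positive.
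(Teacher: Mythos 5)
The paper offers no proof of this theorem; it is imported verbatim from \cite{zhang2021duality}, so there is nothing to compare against line by line. Evaluating your argument on its own: the conic decomposition $M(z)=M(1)+\sum_{k\neq 0}(-m_k)(1-z^{-k})$ is correct (it follows from doubly hyperdominance: $-m_k\geq 0$ for $k\neq 0$ and $M(1)=\sum_k m_k\geq 0$), the collapse modulo $N$ at the roots of unity is sound (terms with $k\equiv 0 \ (\mathrm{mod}\ N)$ contribute $0$), and the pairing with $\mu$ together with the hypothesis \eqref{eq:nonexistenceConditionZhang} correctly forces $M(1)A<0$ and hence $A<0$, where $A=\sum_j\mu_j\Re(G(z_j))$. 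This is the natural duality argument and is consistent with the LP picture the paper sets up in \eqref{eq:mainProgram}--\eqref{eq:shiftConditionsFrequency}.

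Your concern about the degenerate case is legitimate, and you should be more confident about it than your hedging suggests: as stated, the proposition literally fails when $\mu$ is supported at $j=0$ only, because $(1-z_0^{-k})\equiv 0$ makes \eqref{eq:nonexistenceConditionZhang} vacuous, yet systems such as $G\equiv -1$ obviously admit $M=I$. The paper itself tacitly acknowledges that \eqref{eq:nonexistenceConditionZhang} alone is weaker than infeasibility, by noting immediately after the theorem that the additional \emph{unbiasedness} condition $\sum_j\mu_j\Re(G(z_j))\geq 0$ is needed to recover \eqref{eq:shiftConditionsFrequency} with $t_*=0$. Your fix---zeroing $\mu_0$, which is harmless precisely because the $j=0$ term never contributes, and then summing \eqref{eq:nonexistenceConditionZhang} over $k$ with $\sum_{k=0}^{N-1}z_j^{-k}=N\delta_{j0}$ to obtain $A=\sum_{j\geq 1}\mu_j\Re(G(z_j))\geq 0$---is exactly the right repair and makes the contradiction $A<0$ vs.\ $A\geq 0$ go through. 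Note that this only closes the argument under the assumption that some $\mu_j>0$ with $j\neq 0$; equivalently one could simply add the paper's unbiasedness inequality to the hypothesis and then not bother with the reduction at all, since then $\gamma A\geq 0$ is immediate. So: the approach is correct and complete modulo a nondegeneracy hypothesis that the statement omits, and you have correctly diagnosed why it is indispensable.
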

	
	If the program (\ref{eq:mainProgram}) has the optimal value $t_* = 0$, then
	(\ref{eq:nonexistenceConditionZhang}) is indeed satisfied, as extracted from
	(\ref{eq:shiftConditionsFrequency}).
	Conversely, suppose that (\ref{eq:nonexistenceConditionZhang}) is satisfied for some $\mu \geq 0$, $\mu \neq 0$. If the additional unbiasedness condition $\sum_{j=0}^{N-1} \mu_j \Re(G(z_j)) \geq 0$ holds, we obtain (\ref{eq:shiftConditionsFrequency}) with $t_* = 0$ (after modifying $\mu$
	to satisfy $e^\top \mu = 1$ and $\mu_{N-j} = \mu_j$ for all $j=1,\ldots,N-1$ without loss of generality.)
	Then we can proceed as in Section \ref{sec:constructionUncertainty} to obtain some $d \leq N$ and construct a destabilizing nonlinearity in $\bs{\Delta}_d(0,\infty)$, i.e., for which the interconnection with $G \otimes I_d$ has infinite gain.
	
	As another interesting piece of work, the linear program \eqref{eq:mainProgram} also appeared in \cite{claasen1975frequency}.
	
	\begin{theorem}[Theorem 2,\cite{claasen1975frequency}]
		\label{thm:auto-oscillations}
		Let $G$ be a SISO, bounded, LTI system and suppose that
		(\ref{eq:mainProgram}) has the optimal value $t_* = 0$.
		Then the interconnection \eqref{eq:iqcRelationInterconnection}
		of $G$ and any $\Delta \in \Delta_1(0,\infty)$ has no nonzero $N$-periodic internal oscillations, i.e., there do not exist $N$-periodic $e_1,e_2\neq 0$ and an initial state $\xi_0$ of $G$ such that \eqref{eq:iqcRelationInterconnection} is satisfied with $u_1,u_2 = 0$.
	\end{theorem}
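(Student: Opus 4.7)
I would prove the theorem by contradiction: assuming $t_*=0$ and the existence of a nonzero $N$-periodic internal oscillation, I would derive a contradiction, most cleanly by combining the frequency-domain shape of 1D monotonicity with the strong duality characterization of $t_*=0$.

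\textbf{Setup.} Suppose $(e_1,e_2)$ are nonzero $N$-periodic signals satisfying $e_2=Ge_1$ with some initial state $\xi_0$ and $e_1(k)\in f(e_2(k))$ for every $k$, where $f$ is monotone and $0\in f(0)$. Because $G$ is finite-dimensional, bounded, causal and stable, the free response decays and the $N$-periodic steady state satisfies $E_2(j)=G(z_j)E_1(j)$ in the $N$-point DFT over one period. Put $\sigma_j:=\abs{E_1(j)}^2\geq 0$; the nonzero-oscillation hypothesis gives $\sigma\not\equiv 0$.

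\textbf{Frequency-domain monotonicity.} For any two time indices $k,l$, monotonicity of $f$ yields $\bigl(e_1(k)-e_1(k+l)\bigr)\bigl(e_2(k)-e_2(k+l)\bigr)\geq 0$. Summing over $k=0,\ldots,N-1$ and invoking Parseval with $E_2(j)=G(z_j)E_1(j)$ translates this into
\[
\sum_{j=0}^{N-1}\sigma_j\,\Re\bigl(G(z_j)\bigr)\bigl(1-\cos(\omega_j l)\bigr)\geq 0\quad\text{for every }l,
\]
with $\omega_j=2\pi j/N$; pairing each sample with $0\in f(0)$ gives the additional inequality $\sum_j\sigma_j\Re(G(z_j))\geq 0$. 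These are the only frequency-domain constraints available from monotonicity in the scalar setting, since in dimension one monotonicity and cyclic monotonicity coincide.

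\textbf{Strong duality at $t_*=0$.} By LP duality applied to (\ref{eq:mainProgram}) there exist $\mu\geq 0$ with $e^\top\mu=1$ and $\eta\geq 0$ satisfying $\eta=\sum_j\mu_j\Re(G(z_j))$ and, after symmetrizing the shifts $k$ and $N-k$ to drop the imaginary part,
\[
\sum_{j=0}^{N-1}\mu_j\,\Re(G(z_j))\bigl(1-\cos(\omega_j k)\bigr)\geq 0\quad\text{for every }k,
\]
together with the sharper complex shift conditions $\sum_j\mu_j\Re(G(z_j)(1-z_j^k))\geq 0$. Complementary slackness gives the additional structure that $\mu_j>0$ forces the $j$-th primal constraint tight, and $\alpha^*_k>0$ forces the $k$-th dual constraint tight.

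\textbf{The contradiction (crux).} The plan is to use the weights $\sigma$ produced by the oscillation as a candidate direction in the primal LP. Specifically, I would form the cross-summation
\[
\sum_{l}\mu_l\sum_{j}\sigma_j\Re(G(z_j))\bigl(1-\cos(\omega_j l)\bigr)\geq 0,
\]
interchange the order and, using complementary slackness, argue that the only way the two families of inequalities can hold simultaneously is that $\sigma$ is supported on the indices where both $\Re(G(z_j))=0$ and the primal constraint is active; I would then use the structure $\sigma_j=\abs{E_1(j)}^2$ (so that $(\sigma_j)$ is the Fourier transform of the autocorrelation of $e_1$, and thus a power spectrum) to conclude that such support is incompatible with $e_1\not\equiv 0$. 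Equivalently, one can try to explicitly construct a primal point $(t,\alpha)$ with $t>0$ by choosing $\alpha$ proportional to a symmetrized autocorrelation built from the oscillation and checking, one frequency at a time using the shift inequalities above, that $\Re(G(z_j)(1-c_j))\leq -t$ holds strictly—contradicting $t_*=0$.

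\textbf{Main obstacle.} The hard part is this last step: turning the two families of nonnegativity conditions (from $\sigma$ and from $\mu$) into a decisive contradiction. It is not a free consequence of either family alone, and the rigidity needed seems to rest on two pieces of extra structure specific to $d=1$—monotonicity being equivalent to cyclic monotonicity, and $\sigma_j$ being a power spectrum rather than an arbitrary nonnegative weight. Establishing this rigidity, presumably via complementary slackness at the dual optimum, is where the subtlety of Claasen's SISO-only statement (as opposed to Remark~\ref{rem:infiniteGain}, which only delivers oscillations after lifting to $d>1$) genuinely enters.
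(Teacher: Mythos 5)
There is no in-paper proof to compare against (the result is quoted from Claasen et al.), but the hypothesis as printed is evidently a typo, and your plan inherits it. As the sentence following the theorem and consistency with Remark \ref{rem:infiniteGain} show, the intended content is: if \eqref{eq:mainProgram} has optimal value $t_*>0$ (equivalently, by Lemma \ref{thm:infeasibleImpliesOptimalValueBound}, some discrete OZF multiplier satisfies $\Re(G(z_j)M(z_j))<0$ at all $N$-th roots of unity), then no nonzero $N$-periodic internal oscillation exists; equivalently, an oscillation forces $t_*=0$. Your attempt takes ``$t_*=0$'' as the hypothesis and tries to contradict the existence of an oscillation, and this cannot succeed: your own weights $\sigma_j=\abs{E_1(j)}^2$, normalized to $\mu=\sigma/(e^\top\sigma)$ and paired with $\eta=\sum_j\mu_j\Re G(z_j)\geq 0$, constitute a \emph{feasible point of the dual} of \eqref{eq:mainProgram} with objective value $0$, so the oscillation certifies $t_*=0$ rather than clashing with it. The two families of inequalities you want to play off against each other---those generated by $\sigma$ and those satisfied by a dual optimizer $\mu$ at $t_*=0$, cf. \eqref{eq:shiftConditionsFrequency} and \eqref{eq:nonexistenceConditionZhang}---are nonnegativity conditions of exactly the same form and sign; cross-summing them with nonnegative weights, with or without complementary slackness, yields no contradiction. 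The step you flag as the ``main obstacle'' is therefore not merely hard, it is impossible in the direction you set up.

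The repair is to prove the contrapositive of the corrected statement, and your ``frequency-domain monotonicity'' step almost does it, except that you extract too weak a family of constraints. Given nonzero $N$-periodic $e_1,e_2$ with $e_2=Ge_1$ (exact periodicity plus stability give $E_2(j)=G(z_j)E_1(j)$, as in Lemma \ref{lem:periodicLTI}), use that $f\in\operatorname{slope}_1[0,\infty]$ has a convex potential $F$ with $f\subseteq\partial F$: telescoping the subgradient inequality over one period gives the \emph{one-sided} correlation inequalities $\sum_k e_1(k)(e_2(k)-e_2(k+l))\geq 0$ for every shift $l$, and $0\in f(0)$ gives $\sum_k e_1(k)e_2(k)\geq 0$. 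In the DFT domain these read $\sum_j\sigma_j\Re(G(z_j)(1-z_j^{l}))\geq 0$ for all $l$ (hence for $z_j^{-l}$ as well) and $\sum_j\sigma_j\Re G(z_j)\geq 0$, which after normalization is precisely dual feasibility in \eqref{eq:dualityConstraints} with objective value $0$; weak duality gives $t_*\leq 0$, hence $t_*=0$, i.e. $t_*>0$ excludes nonzero $N$-periodic oscillations. Note that your symmetrized inequalities with weights $1-\cos(\omega_j l)$ are strictly weaker: they discard the $\Im G(z_j)$ contribution, which does \emph{not} cancel even though $\sigma_j=\sigma_{N-j}$, and they do not match the dual constraints of \eqref{eq:mainProgram}. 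So your claim that they are ``the only frequency-domain constraints available from monotonicity'' is incorrect---cyclic monotonicity (equivalently, the potential) yields the asymmetric shift inequalities \eqref{eq:nonexistenceConditionZhang}, and those are exactly what the argument needs.
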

	
	In view of Lemma \ref{thm:infeasibleImpliesOptimalValueBound}, we can interpret the existence of some OZF-multiplier $M$ with $\Re G(z_j)M(z_j) < 0$  for $j=0,\ldots,N-1$ as a criterion for the nonexistence of $N$-periodic internal oscillations.
	If $\Re G(z)M(z) < 0$ holds for all $z \in \b{T}$, then there are no internal oscillations of any period, which can be (non-rigorously) thought of as implying stability of the interconnection \eqref{eq:iqcRelationInterconnection}.
	
	\section{Conclusion}
	
	In this note it was shown that the robust stability test induced by O'Shea-Zames-Falb multipliers is exact if the interconnection structure is extended by lifting.
	An explicit method for the construction of the destabilizing nonlinearities as well as connections to duality bounds and criteria for absence of internal periodic oscillations were presented.
	Exactness of the test for the original interconnection remains open and we conjecture it to be false, based on analogous results for the structured singular value.
	
	\setcounter{section}{0}
	\renewcommand{\thesection}{\Alph{section}}
	
	\section{Appendix}
	\subsection{Technical Proofs}
	
	
	\label{sec:proofs}
	\subsubsection{Proof of Lemma \ref{thm:nonexistenceImpliesInfeasible}}
	
	\begin{proof}
		Assume that $\delta > \epsilon/2$ and that (\ref{eq:pdProgram}) is feasible for each $N \in \b{N}$, i.e.,
		\begin{align*}
			\Re(G(z_j^{N})(1-c_j^{N})) \leq -\delta \text{\ for all\ } j=0,\ldots,N-1
		\end{align*}
		with some p.d. $(c_j^N)_{j=0}^{N-1}$ and $c_0^N \leq 1$; recall that $z_0^{N},\ldots,z_{N-1}^N$,
		$z_N^N = z_0^N$ are the $N$-th roots of unity and the superscript $N$ should not be confused with exponentiation.
		
		For each $N \in \b{N}$ let $f_N:\b{T} \to \b{C}$ be the piecewise-linear interpolant of $(c_j^N)_{j=0}^{N-1}$ at the nodes $\{z_0^{N},\ldots,z_{N-1}^N\}$.
		Then $f_N$ is continuous with $f_N(1) \leq 1$ and p.d. by Theorem \ref{thm:belov1}.
		
		Let $\eta = \delta - \epsilon/2 > 0$.
		Since $G$ has a finite-dimensional state-space representation, $G(\cdot)$ is (uniformly) continuous on $\b{T}$.
		Therefore there exists some $\lambda > 0$ such that if $z,w \in \b{T}$ are taken with $\abs{z-w} < \lambda$, then $\abs{G(z) - G(w)} < \eta/4$.
		
		Now fix a large $N \in \b{N}$ with $2\pi/N < \lambda$. We then clearly have $\abs{z_j^N - z_{j+1}^N} < \lambda$ for each $j=0,\ldots,N-1$.
		Defining the arc-function between $z_j^N$ and $z_{j+1}^N$ for $t \in [0,1]$ by
		\begin{align*}
			z_j^N(t) := \exp\left(i\left(\frac{2\pi j}{N}(1-t) + \frac{2\pi (j+1)}{N}t\right)\right)\,,
		\end{align*}
		we get $\abs{z_j^N(t) - z_{j}^N} < \lambda$ and $\abs{z_j^N(t) - z_{j+1}^N} < \lambda$ for any $t \in [0,1]$ and $j=0,\dots,N-1$.
		Also note that $f_N(z_j^N(t)) = (1-t)c_j^N + t c_{j+1}^N$ by the mere definition of $f_N$.
		Thus for $t \in [0,1]$ and $j=0,\dots,N-1$ we infer
		\begin{align*}
			&\Re\left[G(z_j^N(t))(1-f_N(z_j^N(t))\right] \\
			&= (1-t)\Re\left[G(z_j^N(t))(1-c_j^N)\right] \\
			&\qquad + t \Re\left[G(z_j^N(t))(1-c_{j+1}^N)\right] \\
			&= (1-t)\Re\left[G(z_j^N)(1-c_j^N)\right] \\
			&\qquad-(1-t)\Re\left[(G(z_j^N(t)) - G(z_j^N))(1-c_{j}^N)\right] \\
			&\qquad + t \Re\left[G(z_{j+1}^N)(1-c_{j+1}^N)\right] \\
			&\qquad- t\Re\left[(G(z_j^N(t)) - G(z_{j+1}^N))(1-c_{j+1}^N)\right] \\
			&\leq -(1-t)\delta + (1-t)\abs{\Re\left[(G(z_j^N(t)) - G(z_j^N))(1-c_{j}^N)\right]} \\
			&\quad - t\delta + t\abs{\Re\left[(G(z_j^N(t)) - G(z_{j+1}^N))(1-c_{j+1}^N)\right]} \\
			&\leq -\delta + (1-t) \abs{G(z_j^N(t)) - G(z_j^N)} \abs{1-c_{j}^N} \\
			&\qquad +t  \abs{G(z_j^N(t)) - G(z_{j+1}^N)} \abs{1-c_{j+1}^N}\\
			&\leq -\delta + 4\frac{\eta}{4}
			= -\delta + \eta
			= -\frac{\epsilon}{2}\,;
		\end{align*}
		we have used the fact that $\Re\left[G(z_{j}^N)(1-c_{j}^N)\right] \leq -\delta$ and $\Re\left[G(z_{j+1}^N)(1-c_{j+1}^N)\right] \leq -\delta$
		by the definition of $(c_j^N)_{j=0}^{N-1}$ as well as that $\abs{G(z_j^N(t)) - G(z_j^N)} < \eta/4$ and $\abs{G(z_j^N(t)) - G(z_{j+1}^N)} < \eta/4$ and also that $\abs{1-c_j^N} \leq 2$ and $\abs{1-c_{j+1}^N} \leq 2$ since $c_0^N \leq 1$ and $(c_j^N)_{j=0}^{N-1}$ is p.d..
		
		Since all the considered arcs cover the unit circle,
		we have shown that $f = f_N$ actually satisfies (\ref{eq:positiveDefiniteFDI}),
		which implies (see Section~\ref{sec:linearProgram}) that there exists some $M \in \c{H}_1$ satisfying \eqref{eq:negativeMultiplier}, a contradiction.
	\end{proof}
	
	\subsubsection{Proof of Lemma \ref{thm:infeasibleImpliesOptimalValueBound}}
	
	\begin{proof}
		If we identify $\lambda/N = \alpha$, then $c$ equals $\sqrt{N} V \alpha$ and the condition $e^\top \alpha \leq 1$ translates to $c_0 \leq 1$, the condition $\Re(Q(e-\sqrt{N}V\alpha)) \leq -te$ to $\Re((1-c_j)G(z_j)) \leq -t$ for $j=0,\ldots,N-1$, and $\alpha \geq 0$ to $C = (c_{j-k})_{j,k=0}^{N-1} \geq 0$, i.e., to the fact that $(c_j)_{j=0}^{N-1}$ is p.d.
		
		If \eqref{eq:pdProgram} with strict inequalities is feasible for some p.d. sequence $(c_j)_{j=0}^{N-1}$, then we infer that $\alpha = \lambda/N \geq 0$ and $-t = \max_{j} \Re((1-c_j)G(z_j)) < \delta$ are feasible for \eqref{eq:mainProgram} so that $-t_* \leq -t < -\delta$, i.e, $t_*>\delta$.
		
		Conversely, if $t_* >\delta$ and one picks any $t \in (\delta,t_*)$, there is some $\alpha^* \geq 0$ with $\Re(Q(e-\sqrt{N}V\alpha^*)) \leq -t e<-\delta e$ and $e^\top \alpha^* \leq 1$.
		Thus $c = \sqrt{N}V\alpha^*$ is a feasible solution of \eqref{eq:pdProgram} with strict inequalities.
	\end{proof}
	
	\subsubsection{Proof of Lemma \ref{lem:interpolationMonotone}}
	
	This proof requires some preparations as follows.
	\begin{defn}
		A relation $f \subseteq \b{R}^d \times \b{R}^d$ is $\epsilon$-approximately cyclically monotone if for each $n \in \b{N}_0$ and pairs $(x_1,y_1),\ldots,(x_n,y_n) \in f$ we have
		\begin{align}
			\label{eq:approxCyclicallyMonotoneDef}
			\sum_{k=1}^n \<y_{k},x_{k+1} - x_k\> \leq n \epsilon
		\end{align}
		where $x_{n+1}:=x_1$.
	\end{defn}
	This is a generalization of cyclical monotonicity and the following theorem generalizes \cite[Theorem 12.25]{rockafellar2009variational}.
	
	\begin{theorem}
		\label{thm:cyclicallyMonotoneSubgradient}
		A relation $f \subseteq \b{R}^d \times \b{R}^d$ is $\epsilon$-approximately cyclically monotone iff $f \subseteq \partial_\epsilon F$ holds for some convex, lsc and proper function
		$F:\b{R}^d \to \b{R} \cup \{\infty\}$.
	\end{theorem}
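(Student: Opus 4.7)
The plan is to adapt Rockafellar's classical construction of a convex potential from a cyclically monotone relation (cf.\ \cite[Theorem 12.25]{rockafellar2009variational}) to the $\epsilon$-approximate setting. The easy direction ($\Leftarrow$) is immediate: assuming $f \subseteq \partial_\epsilon F$, I sum the $\epsilon$-subgradient inequalities $F(x_{k+1}) \geq F(x_k) + \langle y_k, x_{k+1}-x_k\rangle - \epsilon$ over a cycle with $x_{n+1}=x_1$; the $F$-terms telescope to zero, leaving exactly \eqref{eq:approxCyclicallyMonotoneDef}.

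For the converse direction I may assume $f \neq \emptyset$ (otherwise any proper convex lsc $F$ works), fix a base pair $(x_0, y_0) \in f$, and define
\[
F(x) := \sup\left\{\langle y_n, x - x_n\rangle + \sum_{k=1}^n \langle y_{k-1}, x_k - x_{k-1}\rangle - n\epsilon\right\},
\]
where the supremum runs over all finite chains of length $n \geq 0$ with $(x_k, y_k) \in f$ for $k=1,\ldots,n$. This mirrors Rockafellar's definition but adds a uniform penalty $-\epsilon$ per chain link. Being a pointwise supremum of affine functions, $F$ is convex and lower semicontinuous for free; properness follows from the two estimates $F(x) \geq \langle y_0, x - x_0\rangle > -\infty$ (take $n=0$) and $F(x_0) \leq \epsilon < \infty$, the latter obtained by applying $\epsilon$-approximate cyclical monotonicity to the closed cycle $(x_0, y_0), (x_1,y_1),\ldots,(x_n,y_n)$ of $n+1$ pairs, which bounds the candidate value at $x_0$ by $(n+1)\epsilon - n\epsilon = \epsilon$.

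Finally I verify $f \subseteq \partial_\epsilon F$: given $(x,y) \in f$ and $z \in \mathbb{R}^d$, I take a near-optimal chain realising $F(x)$ and extend it by appending $(x,y)$; the resulting longer chain contributes to $F(z)$ with a value exceeding the chosen near-optimal value of $F(x)$ by exactly $\langle y, z-x\rangle - \epsilon$, which upon passing to the supremum yields $F(z) \geq F(x) + \langle y, z-x\rangle - \epsilon$ for all $z$. The main subtlety throughout is calibrating the $-n\epsilon$ penalty correctly: a different constant per link would misalign either the properness bound at $x_0$ or the $\epsilon$-subgradient inequality for $(x,y)$. Once the penalty is chosen so that each extension of a chain by one pair costs exactly one additional $\epsilon$, both implications reduce to careful telescoping arguments that echo Rockafellar's classical proof.
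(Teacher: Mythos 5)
Your proposal is correct and follows essentially the same route as the paper: the paper constructs the identical Rockafellar-type potential $F$ with the $-n\epsilon$ penalty (your sum $\sum_{k=1}^n\langle y_{k-1},x_k-x_{k-1}\rangle$ is a reindexing of the paper's $\sum_{k=0}^{n-1}\langle y_k,x_{k+1}-x_k\rangle$) and then defers the remaining verifications to a reference. Your write-up simply fills in those deferred details (telescoping for the easy direction, the $n=0$ chain and the closed $(n{+}1)$-cycle bound for properness, and the chain-extension argument for the $\epsilon$-subgradient inequality), all of which check out.
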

	
	In here, $\partial_\epsilon F(x) = \{(x,y) \in \b{R}^d\times \b{R}^d\mid \forall u\in \b{R}^d:\;F(u) - F(x) \geq \<y,u-x\> - \epsilon\}$ is the $\epsilon$-subdifferential of $F$.
	
	\begin{proof}
		To prove the nontrivial direction, we fix some $(x_0,y_0) \in f$ and set
		\begin{align*}
			F(x) = \sup \left\{\<y_n,x-x_n\> + \sum_{k=0}^{n-1} \<y_k,x_{k+1}-x_k\> - n \epsilon\right\}
		\end{align*}
		with the supremum taken over all $(x_1,y_1),\ldots,(x_n,y_n) \in f$, $n \in \b{N}_0$.
		Then $f \subseteq \partial_\epsilon F$ can be shown (see \cite{kharitenko2022some}) by following the original proof in \cite[Theorem 12.25]{rockafellar2009variational}.
	\end{proof}
	
	Finally we recall \cite[Theorem 3.1.2]{zalinescu2002convex} which relates the $\epsilon$-subgradient to the subgradient of a convex function:
	
	\begin{theorem}[Bronsted-Rockafellar]
		\label{thm:bronstedRockafellar}
		Let $F:\b{R}^d \to \b{R}$ be convex, 
		$\epsilon \geq 0$ and $y \in \partial_\epsilon F(x)$.
		Then there exist $\bar{x},\bar{y} \in \b{R}^d$ such that $\bar{y} \in \partial F(\bar{x})$ and $\norm{x - \bar{x}}^2 \leq \epsilon$ and $\norm{y - \bar{y}}^2 \leq \epsilon$.
	\end{theorem}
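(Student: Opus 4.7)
The plan is to prove this through a classical regularization argument in convex analysis. The first step is to rephrase the hypothesis $y \in \partial_\eps F(x)$ as an approximate minimization statement: the defining inequality $F(u) - F(x) \geq \<y, u-x\> - \eps$ says exactly that $x$ is an $\eps$-approximate minimizer of the convex function $G(u) := F(u) - \<y, u\>$, that is, $G(u) \geq G(x) - \eps$ for all $u \in \b{R}^d$. Since $F$ is everywhere finite and convex on $\b{R}^d$, both $F$ and $G$ are automatically continuous.

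Next, I would introduce the regularized auxiliary function $H(u) := G(u) + \sqrt{\eps}\,\norm{u - x}$. The approximate minimization of $G$ combined with the growth of the regularizer yields $H(u) \geq G(x) - \eps + \sqrt{\eps}\,\norm{u-x}$, so $H$ is coercive; being continuous and convex on $\b{R}^d$, it attains a global minimum at some $\bar{x}$. The distance bound then follows by a one-line comparison: from $H(\bar{x}) \leq H(x) = G(x)$ and $G(\bar{x}) \geq G(x) - \eps$ one obtains $\sqrt{\eps}\,\norm{\bar{x}-x} \leq \eps$, i.e. $\norm{\bar{x}-x}^2 \leq \eps$.

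To extract the subgradient, apply Fermat's rule together with the Moreau--Rockafellar sum rule (valid since both summands are finite-valued continuous convex functions on $\b{R}^d$), which gives $0 \in \partial H(\bar{x}) = \partial F(\bar{x}) - y + \sqrt{\eps}\,\partial\norm{\cdot - x}(\bar{x})$. Since the subdifferential of a norm at any point is contained in the closed unit ball, there exists $b \in \b{R}^d$ with $\norm{b}\leq 1$ and $\bar{y} := y - \sqrt{\eps}\,b \in \partial F(\bar{x})$, producing the second bound $\norm{y - \bar{y}}^2 = \eps\,\norm{b}^2 \leq \eps$.

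The main technical point to watch is the balancing of the regularization constant: taking the coefficient of $\norm{u - x}$ to be exactly $\sqrt{\eps}$ is what yields the symmetric estimates $\norm{x - \bar{x}}^2 \leq \eps$ and $\norm{y - \bar{y}}^2 \leq \eps$. Any other coefficient $\eta > 0$ in the regularizer would give $\norm{x - \bar{x}} \leq \eps/\eta$ in tension with $\norm{y - \bar{y}} \leq \eta$, and $\eta = \sqrt{\eps}$ is precisely the minimax choice that equalizes the two. Everything else in the argument is routine convex analysis, so this calibration together with the applicability of the sum rule is the real substance of the proof.
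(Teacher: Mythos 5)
Your argument is correct. Note first that the paper does not prove this statement at all: it is recalled verbatim from \cite[Theorem 3.1.2]{zalinescu2002convex}, where the general (Banach-space) result is obtained via Ekeland's variational principle, essentially the original Br{\o}nsted--Rockafellar argument. Your route is the standard finite-dimensional shortcut that makes Ekeland unnecessary: since $F$ is finite and convex on $\b{R}^d$ it is continuous, so $H(u)=F(u)-\<y,u\>+\sqrt{\eps}\,\norm{u-x}$ is continuous and, by the $\eps$-minimality of $x$ for $G(u)=F(u)-\<y,u\>$, coercive; a global minimizer $\bar{x}$ therefore exists, the comparison $H(\bar{x})\leq H(x)$ gives $\norm{\bar{x}-x}\leq\sqrt{\eps}$, and Fermat's rule plus the Moreau--Rockafellar sum rule (applicable since both summands are finite-valued and continuous) yields $\bar{y}=y-\sqrt{\eps}\,b\in\partial F(\bar{x})$ with $\norm{b}\leq 1$, hence $\norm{y-\bar{y}}\leq\sqrt{\eps}$. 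The calibration $\eta=\sqrt{\eps}$ you highlight is exactly what produces the symmetric bounds in the statement. Two small housekeeping points: the case $\eps=0$ must be split off (your regularizer then vanishes and coercivity fails), but there $y\in\partial_0F(x)=\partial F(x)$ and $\bar{x}=x$, $\bar{y}=y$ work trivially; and in $\partial H(\bar{x})$ the norm term should be read as $\sqrt{\eps}\,\partial\norm{\cdot}(\bar{x}-x)$, which is indeed contained in $\sqrt{\eps}$ times the closed unit ball, as you use. So your proposal is a complete, self-contained proof of the quoted statement in the generality the paper actually needs ($\b{R}^d$, $F$ everywhere finite), whereas the paper simply outsources it to the literature where the heavier variational machinery covers the infinite-dimensional case as well (the latter being relevant only for the aside in Remark \ref{rem:hilbertSpace}).
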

	
	Now we can proceed to the proof of Lemma \ref{lem:interpolationMonotone}.
	
	\begin{proof}
		The proof is divided into four parts.\\
		\emph{Part 1.} The vectors $y$ and $x$ defined in \eqref{eq:vectorsDual} satisfy the cross-correlation inequalities
		\begin{align}
			\label{eq:crossCorrelation}
			\begin{aligned}
				\<x,y\> \geq -t_* \text{\ and\ }
				\<S^k x,y\> \leq \<x,y\> + t_*
			\end{aligned}
		\end{align}
		for $k=0,\ldots,N-1$, where $S = (\delta_{i-j+1 \mod N})_{i,j=0}^{N-1}$ is the circulant shift-matrix.
		Indeed, the first entry of the inequality constraint in \eqref{eq:dualityConstraints} shows $t_* = \eta - \Re(\mu^\top Qe) \geq 0$.
		If $\mu = (\mu_0,\mu_1,\ldots,\mu_{N-1})^\top$ satisfies \eqref{eq:dualityConstraints},
		note that so does $\tilde{\mu} = (\mu_0,\mu_{N-1},\ldots,\mu_1)^\top$ since $G(z_j) = G(z_{N-j})^*$.
		By averaging $\mu$ and $\tilde{\mu}$ we can hence assume that $\mu$ has the property $\mu_j = \mu_{N-j}$ for $j=1,\ldots,N-1$.
		Thus we can omit the real parts. By elimination of $\eta$ and $V$ in \eqref{eq:dualityConstraints} we get
		\begin{align}
			\label{eq:shiftConditionsFrequency}
			\begin{aligned}
				-t_* &\leq \sum_{j=0}^{N-1} \mu_j G(z_j) \,,\\
				\sum_{j=0}^{N-1} \mu_j G(z_j)z_{j}^k &\leq \sum_{j=0}^{N-1} \mu_j G(z_j) + t_*
			\end{aligned}
		\end{align}
		for $k=0,\ldots,N-1$. Then, by $\mu\geq 0$ and the fact that $z_k^j = z_j^k$, the conditions (\ref{eq:shiftConditionsFrequency}) are precisely
		\begin{align*}
			\begin{aligned}
				-t_* &\leq \<Q\mu^{1/2},\mu^{1/2}\>,\\
				\<Z^k Q\mu^{1/2},\mu^{1/2}\> &\leq \<Q\mu^{1/2},\mu^{1/2}\> + t_*
			\end{aligned}
		\end{align*}
		for $k=0,\ldots,N-1$,
		where $Z = \operatorname{diag}(z_0,\ldots,z_{N-1})$.
		Applying the DFT yields \eqref{eq:crossCorrelation}, since $S = VZV^*$.\\
		
		\emph{Part 2.} The matrix $Y$, as defined in \eqref{eq:matrixY}, satisfies $\tr\left(MTY\right) \geq -\tr(M)t_*/N$ for all $M \in \b{R}^{N \times N}$ that are doubly hyperdominant.
		
		To see this, observe that \eqref{eq:crossCorrelation} imply
		\begin{align*}
			\tr(Tyy^\top) = \<Ty,y\> &\geq -t_*\,, \\
			\tr((I-S^k)Tyy^\top) = \<(I-S^k) Ty,y\> &\geq -t_*
		\end{align*}
		for $k=0,\ldots,N-1$.
		This can be equivalently expressed as
		\begin{align*}
			\tr\left(\left(\beta_0 I + \sum_{k=1}^{N-1} \beta_k(I-S^k)\right)Tyy^\top\right) \geq -\left(\sum_{k=0}^{N-1} \beta_k\right)t_*
		\end{align*}
		for all $\beta_0,\ldots,\beta_{N-1} \geq 0$.
		It is easy to see that $M = \beta_0 I + \sum_{k=1}^{N-1} \beta_k(I-S^k) = (\sum_{k=0}^{N-1} \beta_k) I - \sum_{k=1}^{N-1} \beta_k S^k$ is circulant and doubly hyperdominant.
		Conversely, every circulant and doubly hyperdominant matrix can be represented in this way.
		Hence we have shown that
		\begin{align*}
			\tr\left(MTyy^\top\right) \geq -\tr(M)t_*/N
		\end{align*}
		holds for all $M \in \b{R}^{N \times N}$ that are \emph{circulant and doubly hyperdominant}.
		
		Obviosuly the above inequality remains true if $yy^\top$ is replaced with $Y$.
		Now if $M$ is doubly hyperdominant (but not necessarily circulant), then, as $T$, $Y$ and $\bar{M} = \frac{1}{N}\sum_{k=0}^{N-1}S^kMS^{-k}$ are all circulant and $\bar{M}$ is again doubly hyperdominant, we have
		\begin{gather*}
			\tr\left(MTY\right)
			= \frac{1}{N}\sum_{k=0}^{N-1} \tr\left(S^kMTYS^{-k}\right)\\
			= \frac{1}{N}\sum_{k=0}^{N-1} \tr\left(S^kMS^{-k}TY\right)
			= \tr(\bar{M}TY) \\
			\geq -\tr(\bar{M})t_*/N
			= -\tr(M)t_*/N\,.
		\end{gather*}
		
		\emph{Part 3.}
		For the vectors $\bar{x}$, $\bar{y}$ defined in \eqref{eq:interpolationVectors},  the set
		\begin{align}
			\label{eq:approximateCyclicallyMonotoneSet}
			W = \{(\bar{x}_k, \bar{y}_k) \mid k=0,\ldots,N\}
		\end{align}
		of vector pairs is $t_*/N$-approximately cyclically monotone.
		Indeed, due to Part 2.,
		\begin{align}
			\label{eq:permIdentityTrace}
			-\tr\left((I-\Pi)TY\right) \leq \frac{1}{N}\tr(I-\Pi)t_*
		\end{align}
		for any subpermutation matrix $\Pi \in \b{R}^{N \times N}$.
		Obviously
		\begin{align}
			\label{eq:permIdentity}
			\tr(\Pi TY)
			= \<\bar{y},(\Pi \otimes I_d)(T\otimes I_d)\bar{y}\>.
		\end{align}
		If $i_1,\ldots,i_k \in \{0,\ldots,N\}$ are pairwise distinct, then we have two cases:
		\begin{enumerate}
			\item $i_j \neq N$ for all $j=1,\ldots,k$.
			Then let $\Pi$ be the matrix of the permutation $\pi = (i_1\ldots i_k)$ (i.e. $\pi(i_j) = i_{j+1}$ with $i_{k+1} = i_1$ and $\pi(l) = l$ for $l \notin \{i_1,\ldots,i_k\}$).
			By (\ref{eq:permIdentity}), (\ref{eq:permIdentityTrace}), and the fact that $\tr(I-\Pi) = k$ we infer
			\begin{gather*}
				\sum_{j=1}^{k} \<\bar{y}_{i_j},\bar{x}_{i_{j+1}} - \bar{x}_{i_j}\>
				= \sum_{j=1}^{k} \<\bar{y}_{i_j},\bar{x}_{\pi(i_j)} - \bar{x}_{i_{j}}\>  \\
				=  \sum_{l=0}^{N-1} \<\bar{y}_{l},\bar{x}_{\pi(l)} - \bar{x}_{l}\>
				= \<\bar{y},(\Pi \otimes I_d)\bar{x} - \bar{x}\> \\
				= \<\bar{y},((\Pi-I) \otimes I_d)(T \otimes I_d) \bar{y}\> \\
				= -\tr((I-\Pi)TY)
				\leq \tr(I-\Pi) t_*/N
				= k t_*/N\,.
			\end{gather*}
			
			\item $i_m = N$ for some $m \in \{1,\ldots,k\}$.
			Define the subpermutation matrix by $\Pi_{i_j i_{j+1}} = 1$ for $j=1,\ldots,k$ with $j \notin \{m-1,m\}$, $\Pi_{ll} = 1$ for $l \notin \{i_1,\ldots,i_k\}$ and $\Pi_{ij} = 0$ otherwise.
			By a similar argument
			\begin{gather*}
				\sum_{j=1}^{k} \<\bar{y}_{i_j},\bar{x}_{i_{j+1}} - \bar{x}_{i_{j}}\>
				= \sum_{l=0}^{N-1} \<\bar{y}_{l},((\Pi \otimes I_d)\bar{x})_{l} - \bar{x}_{l}\> \\
				= -\tr((I-\Pi)TY)
				\leq \tr(I-\Pi)t_*/N
				= k t_*/N\,,
			\end{gather*}
			since again $\tr(I-\Pi) = k$.
		\end{enumerate}
		\emph{Part 4.} Now we are in the position to construct the nonlinearity.
		Applying Theorem \ref{thm:cyclicallyMonotoneSubgradient} to the finite set $W$ yields a convex
		function $\bar{F}:\b{R}^d \to \b{R}$ with 
		$\bar{y}_k \in \partial_{\rho} \bar{F} (\bar{x}_{k})$ for all $k=0,\ldots,N$,
		where $\rho = t_*/N$.
		
		By Theorem \ref{thm:bronstedRockafellar}, for each $k=0,\ldots,N$ there is a pair $(\hat{x}_k,\hat{y}_k) \in \b{R}^d \times \b{R}^d$ such that
		\begin{align*}
			\norm{\bar{x}_k - \hat{x}_k}^2 \leq \rho\,,\quad
			\norm{\bar{y}_k - \hat{y}_k}^2 \leq \rho\,,\quad
			\hat{y}_k \in \partial \bar{F}(\hat{x}_k)\,.
		\end{align*}
		Finally let $F(x) = \bar{F}(x+\hat{x}_N) - \<\hat{y}_N,x\>$ for $x\in\b{R}^d$.
		Then $F$ is convex and satisfies 
		Also $0 \in \partial F(0)$.
		Setting $f = \partial F$ yields the desired multifunction.
	\end{proof}

	\subsection{Auxiliary results}
	In this section we collect two well-known auxiliary results that we need in this note.
	\label{sec:auxiliary}
	\begin{lemma}
		\label{lem:powerGainRelation}
		If $R \subseteq \ell_{m}^{2e} \times \ell_{l}^{2e}$ and $(x,y) \in R$ is such that  $\operatorname{pow}(y) > 0$, then $\norm{R} \geq \frac{\operatorname{pow}(y)}{\operatorname{pow}(x)}$, where $1/0 := \infty$.
	\end{lemma}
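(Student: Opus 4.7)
The plan is to unpack the definitions and carry out a straightforward limsup argument on the finite-horizon bound that defines $\norm{R}$. First I would assume $\norm{R}<\infty$, since otherwise the claimed inequality $\norm{R}\geq\operatorname{pow}(y)/\operatorname{pow}(x)$ holds trivially. Fix any $\gamma>\norm{R}$. By the very definition of the gain of a relation there exists $\beta\geq 0$ such that $\norm{P_N y}\leq \gamma\norm{P_N x}+\beta$ holds for all $N\in\b{N}_0$.

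Next I would convert this bound into a power-statement. Note that $\norm{P_Nx}^2=\sum_{k=0}^N\norm{x_k}^2$, hence $\operatorname{pow}(x)=\limsup_{N\to\infty}\norm{P_Nx}/\sqrt{N}$ and similarly for $y$. Dividing the above inequality by $\sqrt{N}$ and passing to the limsup gives
\begin{align*}
\operatorname{pow}(y)\;\leq\;\gamma\,\operatorname{pow}(x)+\limsup_{N\to\infty}\frac{\beta}{\sqrt{N}}\;=\;\gamma\,\operatorname{pow}(x),
\end{align*}
because $\beta/\sqrt{N}\to 0$ and the limsup is subadditive. This yields the bound $\operatorname{pow}(y)\leq \gamma\operatorname{pow}(x)$ for every $\gamma>\norm{R}$, so in the limit $\gamma\downarrow\norm{R}$ I obtain $\operatorname{pow}(y)\leq\norm{R}\,\operatorname{pow}(x)$.

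It remains to divide and to handle the degenerate case. If $\operatorname{pow}(x)>0$, then rearranging immediately gives $\norm{R}\geq\operatorname{pow}(y)/\operatorname{pow}(x)$, as desired. If instead $\operatorname{pow}(x)=0$, the bound $\operatorname{pow}(y)\leq\norm{R}\cdot 0=0$ contradicts the hypothesis $\operatorname{pow}(y)>0$; hence the initial assumption $\norm{R}<\infty$ must fail, giving $\norm{R}=\infty=\operatorname{pow}(y)/\operatorname{pow}(x)$ under the convention $1/0=\infty$.

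There is really no hard step here: the only subtlety is the clean passage from the $N$-dependent bound to the limsup, where one must use that $\beta$ is independent of $N$ so that $\beta/\sqrt{N}$ vanishes, together with the elementary inequality $\limsup(a_N+b_N)\leq\limsup a_N+\limsup b_N$ applied to the nonnegative sequences $\gamma\norm{P_Nx}/\sqrt{N}$ and $\beta/\sqrt{N}$. Everything else is bookkeeping of the two cases $\operatorname{pow}(x)>0$ and $\operatorname{pow}(x)=0$.
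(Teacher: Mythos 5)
Your proof is correct and follows essentially the same route as the paper's: fix $\gamma>\norm{R}$, use the defining bound $\norm{P_Ny}\leq\gamma\norm{P_Nx}+\beta$, divide by $\sqrt{N}$, take $\limsup$ to obtain $\operatorname{pow}(y)\leq\gamma\operatorname{pow}(x)$, and then split into the cases $\operatorname{pow}(x)>0$ and $\operatorname{pow}(x)=0$. The only difference is cosmetic: you spell out the subadditivity of $\limsup$ and the passage $\gamma\downarrow\norm{R}$, which the paper leaves implicit.
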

	
	\begin{proof}
		If $\gamma > \norm{R}$, then there is some $\beta \in \b{R}$ with $\norm{P_N y} \leq \gamma \norm{P_N x} + \beta$ for all $T \in \b{N}_0$.
		Dividing by $\sqrt{N}$ and taking $\limsup_{N \to \infty}$ on both sides yields $\operatorname{pow}(y) \leq \gamma \operatorname{pow}(x)$.
		For $\operatorname{pow}(x) = 0$, this is a contradiction, which shows $\norm{R} = \infty$.
		For $\operatorname{pow}(x) > 0$ this implies $\gamma \geq \frac{\operatorname{pow}(y)}{\operatorname{pow}(x)}$.
	\end{proof}
	
	\begin{lemma}
		\label{lem:periodicLTI}
		Let $G:\ell_m^{2e} \to \ell_l^{2e}$ be LTI with a finite-dimensional state space representation $(A,B,C,D)$ such that $A \in \b{R}^{n \times n}$ and $\sigma(A) \cap \b{T} = \emptyset$.
		If $T = (V \otimes I_l)\operatorname{diag}(G(z_0),\ldots,G(z_{N-1}))(V \otimes I_m)^*$ and $x = Ty$ for some $y \in (\b{R}^{m})^N$, $x \in (\b{R}^{l})^N$, then
		there exists some initial state $\xi_0 \in \b{R}^n$ such that the periodic signal $\tilde{x} = (x_{k \mod N})_{k=0}^\infty$ is the output of $G$ to the periodic input $\tilde{y} = (y_{k \mod N})_{k=0}^\infty$.
	\end{lemma}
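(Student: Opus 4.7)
The plan is to produce the initial state $\xi_0$ that makes the state trajectory of $(A,B,C,D)$ driven by $\tilde{y}$ strictly $N$-periodic, and then to verify that the corresponding periodic output coincides with $\tilde{x}$ via the frequency-domain description encoded in $T$.

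First, I would consider the state recursion $\xi_{k+1}=A\xi_k+B\tilde{y}_k$, $\tilde{z}_k = C\xi_k+D\tilde{y}_k$, driven by the $N$-periodic input $\tilde{y}$. Iterating once around a period gives
\begin{equation*}
\xi_N = A^N\xi_0 + \sum_{k=0}^{N-1}A^{N-1-k}B\,y_k.
\end{equation*}
A sufficient condition for $\xi$ to be $N$-periodic is $\xi_N=\xi_0$, i.e.
\begin{equation*}
(I-A^N)\xi_0 = \sum_{k=0}^{N-1}A^{N-1-k}B\,y_k.
\end{equation*}
Because $\sigma(A)\cap\b{T}=\emptyset$, no eigenvalue of $A$ is an $N$-th root of unity, so $\sigma(A^N)\not\ni 1$ and $I-A^N$ is invertible. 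Hence a unique such $\xi_0$ exists, and with this choice both $\xi$ and the output $\tilde{z}=C\xi+D\tilde{y}$ are $N$-periodic.

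Next, I would identify $\tilde{z}$ with $\tilde{x}$. Writing the DFT expansion $y_k=\tfrac{1}{\sqrt{N}}\sum_{j=0}^{N-1}\hat y_j\,z_j^k$ with $\hat y_j=\tfrac{1}{\sqrt{N}}\sum_{l=0}^{N-1}z_j^{-l}y_l$, linearity and the sinusoidal steady-state response of an LTI system (which applies precisely because each frequency $z_j$ avoids $\sigma(A)$, so $(z_jI-A)$ is invertible and $G(z_j)$ is well-defined) yield that the unique $N$-periodic solution is
\begin{equation*}
\tilde z_k=\frac{1}{\sqrt{N}}\sum_{j=0}^{N-1}G(z_j)\hat y_j\,z_j^k
=\sum_{l=0}^{N-1}\Bigl[\tfrac{1}{N}\sum_{j=0}^{N-1}z_j^{k-l}G(z_j)\Bigr]y_l.
\end{equation*}
On the other hand, unpacking $x=Ty$ with $V=(z_j^l)_{l,j}/\sqrt{N}$ gives
\begin{equation*}
x_k = \sum_{l=0}^{N-1}\Bigl[\tfrac{1}{N}\sum_{j=0}^{N-1}z_j^{k-l}G(z_j)\Bigr]y_l,
\end{equation*}
which matches $\tilde z_k$ entry-by-entry, so $\tilde z_k = x_{k\bmod N}=\tilde x_k$.

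The non-trivial ingredient is the sinusoidal steady-state identity: for each $j$, the solution of $\xi_{k+1}=A\xi_k+Bz_j^k u$ of the form $\xi_k=z_j^k\eta_j$ requires $\eta_j=(z_jI-A)^{-1}Bu$, which is legitimate thanks to $\sigma(A)\cap\b{T}=\emptyset$, and yields output $z_j^kG(z_j)u$. Combining this for each Fourier mode with the superposition principle (and using uniqueness of the periodic solution, granted by invertibility of $I-A^N$) produces the closed-form above and closes the proof. The main technical point to get right is the normalization of $V$ and $V^*$ in passing between the DFT representation and $T$; beyond that, the argument is a direct application of classical steady-state frequency-response reasoning.
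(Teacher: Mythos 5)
Your proposal is correct and arrives at the same initial state $\xi_0 = (I-A^N)^{-1}\sum_{k=0}^{N-1}A^{N-1-k}By_k$ as the paper, with the same invertibility argument for $I-A^N$. Where you diverge is in verifying that the resulting periodic output equals $\tilde{x}$: the paper works in the time domain, expanding $G(z)=\sum_{k\geq 0}G_kz^{-k}$ with $G_0=D$, $G_k=CA^{k-1}B$, and computing the circulant coefficients $T_l$ of $T$ explicitly before asserting that $x_k=C\xi_k+Dy_k$ ``follows by an explicit computation.'' You instead work in the frequency domain, decomposing $y$ into its DFT modes, applying the sinusoidal steady-state response $z_j^ku\mapsto z_j^kG(z_j)u$ (legitimate since $z_j\notin\sigma(A)$), and invoking linearity plus uniqueness of the $N$-periodic trajectory to match against the formula $(Ty)_k=\frac{1}{N}\sum_{j,l}z_j^{k-l}G(z_j)y_l$. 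Your route is more self-contained in that the final identification is carried through rather than left implicit, while the paper's route makes the Toeplitz/circulant structure of $T$ visible, which is the form in which $T$ is actually used elsewhere in the argument. The one step you should make fully explicit is the uniqueness claim: the superposed Fourier-mode trajectory is an $N$-periodic solution of the state recursion, and since $I-A^N$ is invertible there is exactly one $N$-periodic trajectory compatible with $\tilde{y}$, so it must be the one launched from $\xi_0$; with that spelled out, your argument is complete.
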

	
	\begin{proof}
		Set
		\begin{align}
			\label{eq:initialStateExtended}
			\xi_0 = (I-A^N)^{-1} \sum_{j=0}^{N-1} A^{N-1-j} B y_j\,.
		\end{align}
		This implies that under the dynamics $\xi_{k+1} = A\xi_k + By_k$ we have $\xi_N = \xi_0$. Thus the state is $N$-periodic to the $N$-periodic input $\tilde{y}$.
		Since $G(z) = \sum_{k=0}^\infty G_k z^{-k}$ for $z \in \b{T}$ with $G_0 = D$ and $G_k = CA^{k-1}B$ for $k \geq 1$, the matrix $T = (T_{k-l})_{k,l=0}^\infty$ is given by $T_l = \sum_{l=0}^\infty G_{k+lN}$ with $T_l = CA^{l-1} (I-A^N)^{-1}B$ for $l \geq 1$ and $T_0 = CA^{N-1}(I-A^N)^{-1}B + D$.
		The fact that $x_k = C\xi_k + Dy_k$ follows now by an explicit computation.
	\end{proof}

\printbibliography

\end{document}